\newtheorem{Theorem}{Theorem}[section]
\newtheorem{Lemma}{Lemma}[section]
\newtheorem{Proposition}{Proposition}[section]
\theoremstyle{definition}
\theoremstyle{remark}
\newtheorem{Remark}{Remark}[section]
\numberwithin{equation}{section}
\renewcommand{\u}{{\bf u}}
\newcommand{\R}{{\mathbb R}}
\newcommand{\Dv}{{\rm div}}
\newcommand{\m}{{\bf m}}
\def\w{w}
\def\f{\frac}
\renewcommand{\O}{\Omega}
\def\D{\Delta }
\def\hf1{^\f{1}{1-\xi^2}}
\def\be{\begin{equation}}
\def\en{\end{equation}}
\def\bs{\begin{split}}
\def\es{\end{split}}
\author{Alexis F. Vasseur}
\address{Department of Mathematics,
The University of Texas at Austin.}
\email{vasseur@math.utexas.edu}
\author{Cheng Yu}
\address{Department of Mathematics,
The University of Texas at Austin.}
\email{yucheng@math.utexas.edu}
\title[Global solutions to an approximated system]
{Global Weak Solutions to Compressible Quantum Navier-Stokes Equations with Damping}
\subjclass[2010]{35Q35, 76N10}
\keywords{Global weak solutions, compressible Quantum Navier-Stokes Equations, approximated system, vacuum, degenerate viscosity.}
\date{\today}
\begin{document}
\begin{abstract}The global-in-time existence of weak solutions to  the barotropic compressible quantum Navier-Stokes equations  with damping is proved for large data in three dimensional space. The model consists of the compressible Navier-Stokes equations with
degenerate viscosity, and a nonlinear third-order differential operator, with the quantum Bohm potential, and the damping terms. The global weak solutions to such system is shown by using the Faedo-Galerkin method and the compactness argument.
This system is also a very important approximation to the compressible Navier-Stokes equations. It will help us to prove the existence of global weak solutions to the compressible Navier-Stokes equations with degenerate viscosity
in three dimensional space.
\end{abstract}

\maketitle

\section{Introduction}
In this paper, we are interested in the existence of global weak solutions to  the barotropic compressible quantum Navier-Stokes equations  with damping terms
\begin{equation}
\begin{split}
\label{goal system}
&\rho_t+\Dv(\rho\u)=0,
\\&(\rho\u)_t+\Dv(\rho\u\otimes\u)+\nabla\rho^{\gamma}-\Dv(\rho\mathbb{D}\u)=-r_0\u-r_1\rho|\u|^2\u+\kappa\rho\nabla\left(\frac{\D\sqrt{\rho}}{\sqrt{\rho}}\right),
\end{split}
\end{equation}
with initial data as follows
\begin{equation}
\label{initial data}
\rho(0,x)=\rho_0(x),\;\;\;(\rho\u)(0,x)=\m_0(x)\quad\text{ in } \O,
\end{equation}
where $\rho$ is density, $\gamma>1$, $\u\otimes\u$ is the matrix with components $\u_i\u_j,$ $\mathbb{D}\u=\frac{1}{2}\left(\nabla\u+\nabla\u^T\right)$ is the sysmetic part of the velocity gradient,
 and $\O=\mathbb{T}^d$ is the $d-$dimensional torus, here $d=2$ or 3.
 The expression $\frac{\D\sqrt{\rho}}{\sqrt{\rho}}$ is called as Bohm potential which can be interpreted as a quantum potential.
The quantum Navier-Stokes equations have a lot of applications, in particular,  quantum semiconductors \cite{FZ}, weakly interacting Bose gases
\cite{Grant} and quantum trajectories of Bohmian mechanics \cite{W}. Recently some dissipative quantum
fluid models have been derived by J\"{u}ngel, see \cite{J 2012}. The damping terms
$$-r_0\u-r_1\rho|\u|^2\u$$
is motivated by the work of \cite{BD}. It allows us to recover the weak solutions to \eqref{goal system} by passing to the limits from the suitable approximation.
The most importance is that the existence
of solutions for the system \eqref{goal system} studied in the current paper is crucial to show the
existence of weak solutions for the Navier Stokes equations with degenerate
viscosity, see \cite{VY}. Models with these drag terms are also common in the literature, see \cite{BD,BD2006,FM}.

   \vskip0.3cm

When $r_0=r_1=\kappa=0$ in \eqref{goal system}, the system reduces to the compressible Navier-Stokes equations with degenerate viscosity $\mu(\rho)=\nu\rho$.
The existence of global weak solutions of such system has been a long standing open problem.
In the case $\gamma=2$ in 2D, this corresponds to the shallow water equations, where $\rho(t,x)$ stands for the height of the water at position $x$, and time $t$, and $\u(t,x)$ is the 2D velocity at the same position, and same time.
For  the constant viscosity case,
Lions in \cite{Lions} established the global existence of  renormalized solutions for $\gamma>\frac{9}{5}$, and
 Feireisl-Novotn\'{y}-Petzeltov\'{a} \cite{FNP} and Feireisl \cite{F04} extended the existence results to $\gamma>\frac{3}{2}$, and even to Navier-Stokes-Fourier system.
  The first tool of handling the degenerate viscosity is due to Bresch, Desjardins and Lin, see \cite{BDL}, where the authors deduced a new mathematical entropy to show the
   structure of the diffusion terms providing some regularity for the density. It was later extended for the case
with an additional quadratic friction term $r\rho|\u|\u$,
see Bresch-Desjardins \cite{BD,BD2006}.
Meanwhile, Mellet-Vasseur \cite{MV} deduced an estimate for proving the stability of smooth solutions for the compressible Navier-Stokes equations.

\vskip0.3cm

When $r_0=r_1=0$ in \eqref{goal system}, the system reduces to the so-called quantum Navier-Stokes equations. Up to our knowledge,
 there are no existence theorem of weak solutions for large data in any dimensional space. Compared to the degenerate compressible Navier-Stokes equations,
 we need to overcome the additional mathematical difficulty from the strongly nonlinear third- order differential operator.
  We have to mention that the Mellet-Vasseur type inequality does not hold for the quantum Navier-Stokes equations due to
the quantum potential. Thus, there are short of the suitable a priori estimates for proving the weak stability.  J\"{u}ngel \cite{J} used the test
 function of the form $\rho\varphi$ to handle the convection term, thus he proved the existence of such a particular
weak solution. In a very recent preprint, Gisclon-Violet \cite{GV} proved the existence of weak solutions to the quantum Navier-Stokes equations with
  singular pressure, where the authors adopt some arguments in \cite{Z} to make use of the cold pressure for compactness. Our methodology turns out to be very close to their paper. Actually,  the authors of \cite{GV} mention  that the existence can be obtained replacing the cold pressure by a drag force.
\vskip0.3cm

The existence of weak solutions to \eqref{goal system}, with the uniform bounds of Theorem \ref{main result 1}, is crucial for the existence of weak solutions to the compressible Navier-Stokes equations with degenerate viscosity in 3D, see \cite{VY}. In that work, we started from the weak solutions to \eqref{goal system}, that is, the main result of this current paper. Unfortunately, the version with the cold pressure proved in \cite{GV}, is not suitable for the  result in \cite{VY}.
On the approximation in \cite{VY},
 we need the terms $r_1\rho|\u|^2\u$ and $\kappa\rho(\frac{\D\sqrt{\rho}}{\sqrt{\rho}})$ for proving a key lemma. In particular, inequality \eqref{J inequality for weak solutions} is crucial to prove
the existence of weak solutions to the compressible Navier-Stokes equations in 3D. This estimate is from the term $\kappa\rho\nabla(\frac{\D\sqrt{\rho}}{\sqrt{\rho}}).$
\vskip0.3cm

We can deduce the following energy inequality for smooth solutions of \eqref{goal system}
\begin{equation}
\label{energy inequality for NS}
E(t)+\int_0^T\int_{\O}\rho|\mathbb{D}\u|^2\,dx\,dt+r_0\int_0^T\int_{\O}|\u|^2\,dx\,dt+r_1\int_0^T\int_{\O}\rho|\u|^4\,dx\,dt\leq E_0,
\end{equation}
where $$E(t)=E(\rho,\u)(t)=\int_{\O}\left(\frac{1}{2}\rho|\u|^2+\frac{1}{\gamma-1}\rho^{\gamma}+\frac{\kappa}{2}|\nabla\sqrt{\rho}|^2\right)\,dx,$$
and  $$E_0=E(\rho,\u)(0)=\int_{\O}\left(\frac{1}{2}\rho_0|\u_0|^2+\frac{1}{\gamma-1}\rho_0^{\gamma}+\frac{\kappa}{2}|\nabla\sqrt{\rho_0}|^2\right)\,dx.$$
However, we should point out that the above a priori estimate are not enough to show the
 stability of the solutions of \eqref{goal system}, in particular,
 for the compactness of $\rho^{\gamma}.$
we have the following Bresch-Desjardins entropy (see \cite{BD,BDL}) for providing more regularity of the density
\begin{equation}
\begin{split}
\label{BD entropy}
&\int_{\O}\left(\frac{1}{2}\rho|\u+\nabla\ln\rho|^2+\frac{\rho^{\gamma}}{\gamma-1}+\frac{\kappa}{2}|\nabla\sqrt{\rho}|^2-r_0\log\rho\right)\,dx+\int_0^T\int_{\O}|\nabla\rho^{\frac{\gamma}{2}}|^2\,dx\,dt
\\&+\int_0^T\int_{\O}\rho|\nabla\u-\nabla^T\u|^2\,dx\,dt+\kappa\int_0^T\int_{\O}\rho|\nabla^2\log \rho|^2\,dx\,dt
\\&\leq\int_{\O}\left(\rho_0|\u_0|^2+|\nabla\sqrt{\rho_0}|^2+\frac{\rho_0^{\gamma}}{\gamma-1}+\frac{\kappa}{2}|\nabla\sqrt{\rho_0}|^2-r_0\log_{-}\rho_0\right)\,dx+C,
\end{split}
\end{equation}
where $C$ is bounded by the initial energy, $\log_{-}g=\log\min(g,1).$

Thus, the initial data should be given in such a way
\begin{equation}
\begin{split}
\label{initial condition}
&\rho_0\in L^{\gamma}(\O),\;\;\;\rho_0\geq 0,\;\;\; \nabla\sqrt{\rho_0}\in L^2(\O),\;\;-\log_{-}\rho_0\in L^1(\O),\\
&\m_0\in L^1(\O),\;\;\m_0=0\;\;\text{ if } \;\rho_0=0,\;\;\frac{|\m_0|^2}{\rho_0}\in L^1(\O).
\end{split}
\end{equation}

We define the weak solution $(\rho,\u)$ to the initial value problem \eqref{goal system} in the following sense: for any $t\in[0,T]$,
\begin{itemize}
\item  \eqref{initial data} holds in $\mathcal{D'}(\O)$,
\item \eqref{energy inequality for NS} and \eqref {BD entropy} hold for almost every $t\in[0,T]$,
\item \eqref{goal system} holds in $\mathcal{D'}((0,T)\times\O))$ and the following is satisfied\\
\begin{equation*}
\begin{split}
& \rho\in L^{\infty}(0,T;L^{\gamma}(\O)),\quad\quad\quad\sqrt{\rho}\u\in L^{\infty}(0,T;L^2(\O)),\\
& \nabla\sqrt{\rho}\in L^{\infty}(0,T;L^2(\O)),\;\;\;\;\;\;\nabla\rho^{\frac{\gamma}{2}}\in L^2(0,T;L^2(\O)),
\\&
\sqrt{\rho}\mathbb{D}\u\in L^2(0,T;L^2(\O)),\;\;\;\;\;\;\sqrt{\rho}\nabla\u\in L^2(0,T;L^2(\O)),
\\&\rho^{\frac{1}{4}}\u\in L^4(0,T;L^4(\O)),\;\;\;\;\;\;\;\;\;\u\in L^2(0,T;L^2(\O)),
\\&\sqrt{\rho}|\nabla^2\log\rho|\in L^2(0,T;L^2(\O)).
\end{split}
\end{equation*}
 \end{itemize}

The following is our main result.
\begin{Theorem}
\label{main result 1} If the initial data satisfy \eqref{initial condition},
 there exists a weak solution $(\rho,\u)$ to \eqref{goal system}-\eqref{initial data} for any $\gamma>1$, any $T>0$,
in particular, the weak solution $(\rho,\u)$ satisfies energy inequality \eqref{energy inequality for NS}, BD-entropy \eqref{BD entropy} and  the following inequality:
\begin{equation}
\label{J inequality for weak solutions}
\kappa^{\frac{1}{2}}\|\sqrt{\rho}\|_{L^2(0,T;H^2(\O))}+\kappa^{\frac{1}{4}}\|\nabla\rho^{\frac{1}{4}}\|_{L^4(0,T;L^{4}(\O))}\leq C,
\end{equation}
where $C$  only depends on the initial data. Moreover, the weak solution $(\rho,\u)$ has the following properties
\begin{equation}
\begin{split}
\label{property}
&\rho\u\in C([0,T];L^{\frac{3}{2}}_{weak}(\O)),\quad(\sqrt{\rho})_t\in L^2((0,T)\times\O);
\end{split}
\end{equation}
If we use $(\rho_{\kappa},\u_{\kappa})$ to denote the weak solution for $\kappa>0$, then
\begin{equation}
\begin{split}
\label{property-2}
&\sqrt{\rho_{\kappa}}\u_{\kappa}\to \sqrt{\rho}\u\,\,\,\,\text{strongly in } L^2((0,T)\times\O),\;\;\text{as } \;\kappa\to0,
\end{split}
\end{equation}
where $(\rho,\u)$ in \eqref{property-2} is a weak solution to \eqref{goal system}-\eqref{initial data}  with $\kappa=0.$  We remark the metric space $C([0,T];L^{\frac{3}{2}}_{weak}(\O))$ of function
$f:[0,T]\to L^{\gamma}(\O)$  which are continuous with respect to the weak topology.
\end{Theorem}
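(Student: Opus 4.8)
The plan is to build the solution by a multilevel Faedo--Galerkin approximation, to derive the three uniform a priori bounds --- the energy inequality \eqref{energy inequality for NS}, the Bresch--Desjardins entropy \eqref{BD entropy}, and the new estimate \eqref{J inequality for weak solutions} --- at every level, and then to pass to the limit by a compactness argument adapted to the degenerate third-order structure; \eqref{J inequality for weak solutions} will serve both as part of the conclusion and as a tool in the limit passage. First I would regularize \eqref{goal system}: add an artificial viscosity $\varepsilon\Delta\rho$ to the continuity equation (so that it becomes parabolic and $\rho$ stays strictly positive at the Galerkin level), replace $\Dv(\rho\mathbb{D}\u)$ by a uniformly elliptic approximation together with a vanishing-viscosity correction chosen so as to preserve the BD structure, and add a regularizing pressure $\delta\rho^{\beta}$ with $\beta$ large. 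In this regularized system, for $\u$ in a finite-dimensional trigonometric space I would solve the (now linear parabolic) continuity equation for $\rho=\rho[\u]$, substitute into the momentum balance to obtain an ODE system for the Fourier coefficients, solve it locally by a fixed-point argument, and extend it globally using the energy estimate. The drag terms $-r_0\u-r_1\rho|\u|^2\u$ are monotone perturbations and cause no difficulty here, while the Bohm term is treated after rewriting it in the divergence form
$$
\rho\nabla\!\left(\frac{\Delta\sqrt\rho}{\sqrt\rho}\right)=\tfrac12\,\Dv\!\left(\rho\,\nabla^2\log\rho\right)=\Dv\!\left(\sqrt\rho\,\nabla^2\sqrt\rho-\nabla\sqrt\rho\otimes\nabla\sqrt\rho\right),
$$
which turns the third-order operator into the divergence of quantities controlled by the entropy.

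For the a priori estimates, testing the regularized momentum equation by $\u$ yields \eqref{energy inequality for NS}, and the Bresch--Desjardins procedure --- testing by $\u+\nabla\log\rho$, equivalently combining the momentum equation with $\nabla$ of the continuity equation --- yields \eqref{BD entropy}, giving in particular $\nabla\rho^{\gamma/2}\in L^2$, $\sqrt\rho\in L^\infty(0,T;H^1)$, $\sqrt\rho(\nabla\u-\nabla^T\u)\in L^2$ and $\kappa^{1/2}\sqrt\rho\,\nabla^2\log\rho\in L^2$. From this last bound and a pointwise algebraic inequality of the type $\int_\Omega\rho|\nabla^2\log\rho|^2\,dx\ge c\int_\Omega\big(|\nabla^2\sqrt\rho|^2+|\nabla\rho^{1/4}|^4\big)\,dx$ (obtained by expressing $\nabla^2\log\rho$ through $\sqrt\rho$ and integrating by parts, in the spirit of \cite{J,GV}) one deduces \eqref{J inequality for weak solutions}. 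All of these bounds are uniform in the approximation parameters $n,\varepsilon,\delta$, and uniform in $\kappa$ since the $\kappa$-dependent terms in \eqref{energy inequality for NS}--\eqref{BD entropy} are dominated by the initial energy for every $\kappa>0$. From the continuity equation, $(\sqrt\rho)_t=-\nabla\sqrt\rho\cdot\u-\tfrac12\sqrt\rho\,\Dv\u$, which lies in $L^2((0,T)\times\Omega)$ by $\nabla\rho^{1/4}\in L^4$ (from \eqref{J inequality for weak solutions}), $\rho^{1/4}\u\in L^4$ (from the $r_1$-term) and $\sqrt\rho\,\nabla\u\in L^2$; moreover the $r_0$-term gives $\u\in L^2((0,T)\times\Omega)$. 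The two bounds $\rho^{1/4}\u\in L^4$ and $\u\in L^2$ are precisely the substitutes for the Mellet--Vasseur inequality, which is unavailable here because of the quantum potential.

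For the passage to the limit (carried out successively in $n$, then $\varepsilon$, then $\delta$), the bounds $\nabla\rho^{\gamma/2}\in L^2$, $\sqrt\rho\in L^\infty H^1\cap L^2 H^2$ together with a time-regularity estimate from the continuity equation give, via Aubin--Lions, strong convergence $\rho\to\rho$ in $L^p((0,T)\times\Omega)$ and a.e., hence strong convergence of the pressure $\rho^\gamma$ (and of $\delta\rho^\beta$). The heart of the argument is strong $L^2$ convergence of $\sqrt\rho\,\u$: using the momentum equation to obtain $\rho\u\in C([0,T];L^{3/2}_{weak})$ --- which gives \eqref{property} --- together with the strong convergence of $\rho$, the uniform bound $\rho^{1/4}\u\in L^4$ to control the vacuum region, and $\u\in L^2$ off vacuum, one shows $\sqrt{\rho_n}\u_n\to\sqrt\rho\,\u$ strongly in $L^2((0,T)\times\Omega)$; this allows passing to the limit in $\rho\u\otimes\u$ and in $r_1\rho|\u|^2\u=r_1(\rho^{1/4}\u)^3\rho^{1/4}$. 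The Bohm term is passed to the limit in the divergence form above, using $\sqrt{\rho_n}\to\sqrt\rho$ strongly in $L^2(0,T;H^1)$ (Aubin--Lions with $(\sqrt\rho)_t\in L^2$) and weakly in $L^2(0,T;H^2)$. Since every estimate used is $\kappa$-independent, running the same compactness scheme with $\kappa\to0$ produces \eqref{property-2} and a weak solution of \eqref{goal system} with $\kappa=0$.

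The main obstacle is the strong $L^2$ compactness of $\sqrt\rho\,\u$ together with the limit passage in the third-order Bohm term: one must handle the vacuum set, where $\u$ itself need not converge, purely through the $r_1$-drag-generated bound $\rho^{1/4}\u\in L^4$, and simultaneously extract enough regularity of $\sqrt\rho$ from the entropy and from \eqref{J inequality for weak solutions} to give a meaning to $\sqrt\rho\,\nabla^2\sqrt\rho$ in the limit. A closely related difficulty is to design the regularized system so that all three a priori estimates survive at once --- in particular so that the artificial viscosity $\varepsilon\Delta\rho$, the elliptic regularization of $\Dv(\rho\mathbb{D}\u)$, and the pressure term $\delta\rho^\beta$ do not destroy the Bresch--Desjardins entropy structure --- and to establish the pointwise algebraic inequality above with constants independent of $\rho$, including near vacuum.
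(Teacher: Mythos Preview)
Your overall architecture --- Faedo--Galerkin, energy plus BD entropy plus the J\"ungel-type inequality, then compactness driven by the drag bounds $\rho^{1/4}\u\in L^4$ and $\u\in L^2$ --- matches the paper. The treatment of the Bohm term in divergence form, the derivation of $(\sqrt\rho)_t\in L^2$, and the $\sqrt\rho\,\u$ compactness argument are all essentially what the paper does.

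There is, however, a real gap in the regularization you propose. You add only $\varepsilon\Delta\rho$, a uniformly elliptic correction of the viscous stress, and an artificial pressure $\delta\rho^\beta$. At the Galerkin level $\rho>0$ by the parabolic maximum principle, but this lower bound is \emph{not uniform in $N$}; once you pass $N\to\infty$ it is lost, and $\delta\rho^\beta$ gives no control of $\rho^{-1}$. The BD entropy, however, cannot be derived at the Galerkin level (since $\nabla\log\rho\notin X_N$ is not an admissible test function there); it must be obtained \emph{after} the $N$-limit by testing the infinite-dimensional momentum equation with $\nabla\log\rho$, and for that one needs $\rho$ bounded away from zero and sufficiently smooth in $x$. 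Your scheme does not provide this, so the step ``the Bresch--Desjardins procedure\dots yields \eqref{BD entropy}'' does not go through as written.

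The paper resolves this by a different pair of regularizers: a singular (``cold'') pressure $\eta\nabla\rho^{-10}$ and a high-order capillarity-type term $\delta\rho\nabla\Delta^{9}\rho$. The first forces $\rho^{-1}\in L^\infty_t L^{10}_x$ uniformly; the second gives $\rho\in L^\infty_t H^9_x\cap L^2_t H^{10}_x$. Combining these through a Sobolev inequality of the form $\|\rho^{-1}\|_{L^\infty}\le C(1+\|\rho\|_{H^{k+2}})^2(1+\|\rho^{-1}\|_{L^3})^3$ yields a genuine pointwise lower bound $\rho\ge C(\delta,\eta)>0$ \emph{after} the Galerkin limit, which is exactly what is needed to test with $\nabla\log\rho$ and obtain the BD entropy rigorously. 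The limits are then taken in the order $N\to\infty$, then $\varepsilon,\mu\to0$, then $\eta,\delta\to0$; the last two require separate arguments (Lemmas on $\eta\rho_\eta^{-10}\to0$ in $L^1$ and on the vanishing of the $\delta$-term) that your Lions--Feireisl style pressure $\delta\rho^\beta$ would not generate. If you replace your $\delta\rho^\beta$ by this pair of terms and reorganize the order in which the estimates are derived, the rest of your outline goes through essentially unchanged.
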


\begin{Remark}
\label{remark on properties} We will use \eqref{J inequality for weak solutions}-
\eqref{property} in \cite{VY} to prove the weak solutions to \eqref{goal system} with $r_0=r_1=\kappa=0.$ In fact, inequality \eqref{J inequality for weak solutions} is very crucial to prove a key lemma in \cite{VY}.
\end{Remark}
\begin{Remark}
The existence result contains the case
 with $\kappa=0$,  which can be obtained as the limit when $\kappa>0$ goes to 0 in (\ref{goal system}), by  standard compactness analysis.
\end{Remark}

\begin{Remark}
\label{remark on weak formulation} The weak formulation reads as
\begin{equation}
\begin{split}
\label{weak formulation of NSK}
&\int_{\O}\rho\u\cdot\psi\,dx|_{t=0}^{t=T}-\int_{0}^{T}\int_{\O}\rho\u\psi_t\,dx\,dt
-\int_{0}^{T}\int_{\O}\rho\u\otimes\u:\nabla \psi\,dx\,dt\\&-\int_{0}^{T}\int_{\O}\rho^{\gamma}\Dv\psi\,dx\,dt
-\int_0^{T}\int_{\O}\rho\mathbb{D}\u:\nabla\psi\,dx\,dt
\\& =-r_0\int_{0}^{T}\int_{\O}\u\psi\,dx\,dt-r_1\int_0^T\int_{\O}\rho|\u|^2\u\psi\,dx\,dt-2\kappa\int_0^T\int_{\O}\D\sqrt{\rho}\nabla\sqrt{\rho}\psi\;dx\;dt
\\&-\kappa\int_0^T\int_{\O}\D\sqrt{\rho}\sqrt{\rho}\Dv\psi\,dx\,dt.
\end{split}
\end{equation}
for any test function $\psi.$
\end{Remark}

\section{Faedo-Galerkin approximation}
In this section, we construct the solutions to the approximation scheme by Faedo-Galerkin method. Motivated by the work of Feireisl-Novotn\'{y}-Petzeltov\'{a} \cite{FNP} and Feireisl \cite{F04}, we proceed similarly as in J\"{u}ngel \cite{J}.
We introduce a finite dimensional space $X_N=\text{span}\{e_1,e_2,....,e_N\}$, where $N\in \mathbb{N}$, each $e_i$ be an orthonormal basic of $L^2(\O)$ which is also an orthogonal basis of $H^2(\O).$ We notice that
$\u\in C^0([0,T];X_N)$ is given by
$$\u(t,x)=\sum_{i=1}^{N}\lambda_{i}(t)e_{i}(x),\;\;\;(t,x)\in [0,T]\times\O,$$
for some functions $\lambda_i(t)$, and the norm of $\u$ in $C^0([0,T];X_N)$ can be written as
$$\|\u\|_{C^0([0,T];X_n)}=\sup_{t\in [0,T]}\sum_{i=1}^{N}|\lambda_i(t)|.$$
And hence, $\u$ can be bounded in $C^0([0,T];C^k(\O))$ for any $k\geq 0$, thus
$$\|\u\|_{C^0([0,T];C^k(\O))}\leq C(k)\|\u\|_{C^0([0,T];L^2(\O))}.$$
For any given $\u \in C^0([0,T];X_N),$ by the classical theory of parabolic equation, there exists a classical solution $\rho(t,x)\in C^1([0,T];C^3(\O))$ to the following approximated system
\begin{equation}
\label{approximated system parabolic equation}
\rho_t+\Dv(\rho\u)=\varepsilon\D\rho,\quad\rho(0,x)=\rho_0(x)\quad\text{ in } (0,T)\times \O
\end{equation}
with the initial data
\begin{equation}
\label{regulation intial data}\rho(0,x)=\rho_0(x)\geq\nu>0,\quad\text{ and }\rho_0(x)\in C^{\infty}(\O),
\end{equation}
where $\nu>0$ is a constant.

We should remark that this solution $\rho(t,x)$ satisfies the following inequality
\begin{equation}
\label{below and above estimate on density}
\inf_{x\in \O}\rho_0(x)\exp^{-\int_0^T\|\Dv\u\|_{L^{\infty}(\O)}\,ds}\leq \rho(t,x)\leq
\sup_{x\in \O}\rho_0(x)\exp^{\int_0^T\|\Dv\u\|_{L^{\infty}(\O)}\,ds}
\end{equation}
for all $(t,x)$ in $(0,T)\times\O.$ By \eqref{regulation intial data} and \eqref{below and above estimate on density}, there exists a constant $\theta_0>0$ such  that
\begin{equation}
\label{lower and upper bounds for density}
0< \theta_0\leq \rho(t,x)\leq \frac{1}{\theta_0}\quad\quad\text{ for } (t,x)\in (0,T)\times \O.
\end{equation}
Thus, we can introduce a linear continuous operator $S:\;\; C^0([0,T];X_N)\to C^0([0,T];C^k(\O))$ by $S(\u)=\rho$,
and \begin{equation}
\label{continuous on S}
\|S(\u_1)-S(\u_2)\|_{C^0([0,T];C^k(\O))}\leq C(N,k)\|\u_1-\u_2\|_{C^0([0,T];L^2(\O))}
\end{equation}
 for any $k\geq 1.$

The Faedo-Galerkin approximation for the weak formulation of the momentum balance is as follows
\begin{equation}
\begin{split}
\label{approximation momentum balance}
&\int_{\O}\rho\u(T)\varphi\,dx-\int_{\O}\m_0\varphi\,dx+\mu\int_0^T\int_{\O}\D\u\cdot\D\varphi\,dx\,dt-\int_0^T\int_{\O}(\rho\u\otimes\u):\nabla\varphi\,dx\,dt
\\&+\int_0^T\int_{\O}2\rho\mathbb{D}\u:\nabla\varphi\,dx\,dt-\int_0^T\int_{\O}\rho^{\gamma}\nabla\varphi\,dx\,dt+\eta\int_0^T\int_{\O}\rho^{-10}\nabla\varphi\;dx\,dt
\\&+\varepsilon\int_0^T\int_{\O}\nabla\rho\cdot\nabla\u\varphi\,dx\,dt=-r_0\int_0^T\int_{\O}\u\varphi\,dx\,dt-r_1\int_0^T\int_{\O}\rho|\u|^2\u\varphi\,dx\,dt
\\&-2\kappa\int_0^T\int_{\O}\D\sqrt{\rho}\nabla\sqrt{\rho}\psi\;dx\;dt
-\kappa\int_0^T\int_{\O}\D\sqrt{\rho}\sqrt{\rho}\Dv\psi\,dx\,dt+\delta\int_0^T\int_{\O}\rho\nabla\D^{9}\rho\varphi\,dx\,dt,
\end{split}
\end{equation}
for any test function $\varphi\in X_N$.  The extra terms $\eta\nabla\rho^{-10}$ and $\delta\rho\nabla\D^9\rho$ are necessary to keep the density bounded, and bounded away from zero for all time. This enables us to take
$\frac{\nabla\rho}{\rho}$ as a test function to derive the Bresch-Desjardins entropy.

 To solve \eqref{approximation momentum balance}, we follow the same arguments as  in \cite{FNP, F04, J} and
introduce the following operators, given the density function $\rho(t,x)\in L^1(\O)$ with $\rho\geq\underline{\rho}>0,$ here we choose $\underline{\rho}=\theta_0.$
We define
$$\mathfrak{M}[\rho(t),\cdot]:X_N\to X_N^*,\quad<\mathfrak{M}[\rho]\u,\w>=\int_{\O}\rho\u\cdot\w\,dx,\;\;\text{ for }\u,\w\in X_N.$$
We can show that $\mathfrak{M}[\rho]$ is invertible $$\|\mathfrak{M}^{-1}(\rho)\|_{L(X_N^{*},X_N)}\leq \underline{\rho}^{-1},$$
where $L(X_N^{*},X_N)$ is the set of all bounded linear mappings from $X_N^{*}$ to $X_N$.
  It is Lipschitz continuous in the following sense
\begin{equation}
\label{continuous on M}
\|\mathfrak{M}^{-1}(\rho_1)-\mathfrak{M}^{-1}(\rho_2)\|_{L(X_N^{*},X_N)}\leq C(N,\underline{\rho})\|\rho_1-\rho_2\|_{L^1(\O)}
\end{equation}
for
any $\rho_1$ and $\rho_2$ from the following set $$N_{\nu}=\{\rho\in L^1(\O)| \;\;\inf_{x\in \O}\rho\geq \nu>0.\}$$
 For more details, we refer the readers to \cite{FNP,F04,J}.

We are looking for $\u_n\in C([0,T];X_n)$ solution of the following nonlinear integral equation
\begin{equation}
\label{integral equation}
\u_N(t)=\mathfrak{M}^{-1}(S(\u_N))(t)\left(\mathfrak{M}[\rho_0](\u_0)+\int_0^T\mathfrak{N}(S(\u_N),\u_N)(s)\,ds\right),
\end{equation}
where
\begin{equation*}
\begin{split}
\mathfrak{N}(S(\u_N),\u_N)&=-\Dv(\rho\u_N\otimes\u_N)+\Dv(\rho\mathbb{D}\u_N)+\mu\D^2\u_N-\varepsilon\nabla\rho\cdot\nabla\u_N+\eta\nabla\rho^{-10}-\nabla\rho^{\gamma}
\\&-r_0\u_N-r_1\rho|\u_N|^2\u_N+\kappa\rho\nabla\left(\frac{\D\sqrt{\rho}}{\sqrt{\rho}}\right)+\delta\rho\nabla\D^9\rho,
\end{split}
\end{equation*}
$\rho=S(\u_N)$.

 Thanks to \eqref{continuous on S} and \eqref{continuous on M}, we can apply a fixed point argument to solve the nonlinear equation \eqref{integral equation}  on a short time interval $[0,T^{*}]$ for $T^{*}\leq T,$ in the space $C^0([0,T^{*}];X_N).$ Thus, there exists a local-in-time solution $(\rho_N,\u_N)$ to \eqref{approximated system parabolic equation}, \eqref{integral equation}. Observe that $L^2-$ norm and
$C^2-$norm are equivalent on $X_N$.

Differentiating \eqref{approximation momentum balance} with respect to time $t$ and taking $\varphi=\u_N$, we have the following energy balance
\begin{equation}
\label{enery equality}
\begin{split}&\frac{d}{dt}E(\rho_N,\u_N)+\mu\int_{\O}|\D\u_N|^2\,dx+\int_{\O}\rho_N|\mathbb{D}\u_N|^2\,dx+\varepsilon\delta\int_{\O}|\D^5\rho_N|^2\,dx
\\&+\varepsilon\int_{\O}|\nabla\rho_N^{\frac{\gamma}{2}}|^2\,dx+\varepsilon\eta\int_{\O}|\nabla\rho_N^{-5}|^2\,dx
+r_0\int_{\O}|\u_N|^2\,dx
+r_1\int_{\O}\rho_N|\u_N|^4\,dx
\\&+\kappa\varepsilon\int_{\O}\rho_N|\nabla^2\log\rho_N|^2\,dx=0,
\end{split}
\end{equation}
on $[0,T^{*}],$
where $$E(\rho_N,\u_N)=\int_{\O}\left(\frac{1}{2}\rho_N|\u_N|^2+\frac{\rho_N^{\gamma}}{\gamma-1}+\frac{\eta}{10}\rho_N^{-10}+\frac{\kappa}{2}|\nabla\sqrt{\rho_N}|^2+\frac{\delta}{2}|\nabla\D^4\rho_N|^2\right)\,dx,$$
and
$$E_0(\rho_N,\u_N)=\int_{\O}\left(\frac{1}{2}\rho_0|\u_0|^2+\frac{\rho_0^{\gamma}}{\gamma-1}+\frac{\eta}{10}\rho_0^{-10}+\frac{\kappa}{2}|\nabla\sqrt{\rho_0}|^2+\frac{\delta}{2}|\nabla\D^4\rho_0|^2\right)\,dx.$$

Here we used the identity $$2\rho_N\nabla(\frac{\D\sqrt{\rho_N}}{\sqrt{\rho_N}})=\Dv\left(\rho_N\nabla^2(\log\rho_N)\right)$$
to yield
$$\int_{\O}\frac{\D\sqrt{\rho_N}}{\sqrt{\rho_N}}\D\rho_N\,dx=-\int_{\O}\rho_N\nabla\log\rho_N\cdot\nabla\left(\frac{\D\sqrt{\rho_N}}{\sqrt{\rho_N}}\right)\,dx=\frac{1}{2}\int_{\O}\rho_N|\nabla^2\log\rho_N|^2\,dx.$$
Energy equality \eqref{enery equality} yields
\begin{equation}
\label{high estimate on velocity}
\int_0^{T^{*}}\|\D\u_N\|_{L^2}^2\,dt\leq E_0(\rho_n,\u_N)<\infty.
\end{equation}
Due to $\text{dim} X_N<\infty$ and \eqref{below and above estimate on density}, there exists a constant $\theta_0>0$ such  that
\begin{equation}
\label{positive density1}
0<\theta_0\leq \rho_N(t,x)\leq \frac{1}{\theta_0}
\end{equation}
for all $t\in(0,T^{*}).$ However, this $\theta_0$ depends on $N$ and it is the same to $\theta_0$ in \eqref{lower and upper bounds for density}.
Energy equality \eqref{enery equality} gives us
$$\sup_{t\in(0,T^{*})}\int_0^{T^{*}}\rho_N|\u_N|^2\,dx\leq E_0(\rho_N,\u_N)<\infty,$$
and $$\int_0^{T^{*}}\rho_N|\mathbb{D}\u_N|^2\,dx\,dt\leq E_0(\rho_N,\u_N)<\infty,$$
which, together with \eqref{high estimate on velocity}, \eqref{positive density1}, implies
\begin{equation}
\label{W estimate on velocity}
\sup_{0\in(0,T^{*})}\left(\|\u_N\|_{L^{\infty}}+\|\nabla\u_N\|_{L^{\infty}}+\|\D\u_N\|_{L^{\infty}}\right)\leq C(E_0(\rho_N,\u_N),N),
\end{equation}
where we used a fact that the equivalence of $L^2$ and $L^{\infty}$ on $X_N$.
By \eqref{continuous on S}, \eqref{continuous on M}, \eqref{positive density1} and \eqref{W estimate on velocity}, repeating our above arguments many times, we can extend $T^{*}$ to $T$. Thus there exists a solution $(\rho_N,\u_N)$ to \eqref{approximated system parabolic equation}, \eqref{integral equation} for any $T>0.$

Here we need to state the following lemma due to J\"{u}ngel \cite{J}:
\begin{Lemma}
\label{Lemma of J inequality}
For any smooth positive function $\rho(x)$, we have
$$\int_{\O}\rho|\nabla^2\log\rho|^2\,dx\geq \frac{1}{7}\int_{\O}|\nabla^2\sqrt{\rho}|^2\,dx$$
and
$$\int_{\O}\rho|\nabla^2\log\rho|^2\,dx\geq \frac{1}{8}\int_{\O}|\nabla\rho^{\frac{1}{4}}|^4\,dx.$$
\end{Lemma}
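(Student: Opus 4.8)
The plan is to prove both inequalities by brute-force expansion in terms of the components of $\nabla^2 \log\rho$ and then completing the square. First I would write $L := \log\rho$, so that $\nabla^2\log\rho$ has entries $L_{ij} = \partial_i\partial_j L$, and compute that $\partial_i\sqrt\rho = \tfrac12 \sqrt\rho\, L_i$ and
$$\partial_i\partial_j\sqrt\rho = \tfrac12\sqrt\rho\left(L_{ij} + \tfrac12 L_i L_j\right),$$
so that $|\nabla^2\sqrt\rho|^2 = \tfrac14 \rho \sum_{i,j}\left(L_{ij}+\tfrac12 L_i L_j\right)^2$. Similarly, $\rho^{1/4} = e^{L/4}$ gives $\partial_i \rho^{1/4} = \tfrac14 \rho^{1/4} L_i$, hence $|\nabla\rho^{1/4}|^4 = \tfrac1{256}\rho |\nabla L|^4 = \tfrac1{256}\rho\left(\sum_i L_i^2\right)^2$. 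The task then reduces, after dividing out the common factor $\rho$ (which is positive), to pointwise-plus-integrated algebraic inequalities relating $\sum_{i,j}L_{ij}^2$, the cross terms $\sum_{i,j} L_{ij}L_iL_j$, and $\left(\sum_i L_i^2\right)^2$.

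The key analytic input — and this is really J\"ungel's trick — is that integration by parts converts the indefinite cross terms into definite ones. Concretely, for a smooth positive periodic $\rho$,
$$\int_\O \rho\, L_{ij} L_i L_j \, dx = \int_\O e^{L}\, L_{ij}L_iL_j\,dx,$$
and integrating by parts in $x_j$ (moving the derivative off $L_{ij}$, noting $\partial_j e^L = e^L L_j$) produces terms of the form $\int \rho\, L_i L_j^2 L_j$-type and $\int \rho\, |\nabla L|^2 \Delta L$-type expressions, which upon further integration by parts can all be re-expressed through $\int\rho\, L_{ij}^2$, $\int \rho\,(\Delta L)^2$, and $\int\rho\,|\nabla L|^4$. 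The upshot is an identity (or a one-sided inequality) of the schematic form
$$\int_\O \rho\left(\sum_{i,j}\left(L_{ij}+\tfrac12 L_i L_j\right)^2\right)dx = \int_\O \rho\left(c_1 \sum_{i,j}L_{ij}^2 + c_2 (\Delta L)^2 + c_3 |\nabla L|^4\right)dx$$
with explicit constants $c_i$ coming out of the combinatorics, from which the comparison with $\int\rho\sum L_{ij}^2$ and with $\int\rho|\nabla L|^4$ follows by keeping track of signs and using $(\Delta L)^2 \le d\sum_{i,j}L_{ij}^2$. The constants $\tfrac17$ and $\tfrac18$ are then whatever survives this bookkeeping (they are not meant to be sharp).

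The main obstacle I expect is precisely the bookkeeping in that integration-by-parts step: one must be careful that all the boundary terms vanish (this is automatic on $\O=\mathbb T^d$) and that the various third-order integrals genuinely reduce to the three "canonical" quadratic-in-$L_{ij}$ / quartic-in-$L_i$ quantities without leaving a residual indefinite term. A clean way to organize this is to first establish, for each fixed pair $(i,j)$, the one-dimensional-flavored identity obtained by two integrations by parts, and only then sum over $i,j$; summing too early mixes the indices and obscures which terms cancel. Once the identity is in hand, the two claimed inequalities are immediate: the first by discarding the manifestly nonnegative $|\nabla L|^4$ and $(\Delta L)^2$ contributions (or absorbing them), the second by discarding the $\sum L_{ij}^2$ contribution after using Cauchy–Schwarz to dominate any leftover cross term. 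I would also remark that since $\rho$ here is smooth and bounded away from zero (guaranteed in our approximation by \eqref{lower and upper bounds for density}), all manipulations with $\log\rho$ are legitimate, and the general case then follows by the density/approximation argument already built into the scheme.
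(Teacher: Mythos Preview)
Your approach is correct and essentially the same as the paper's: the paper uses the identity $\sqrt\rho\,\nabla^2\log\sqrt\rho = \nabla^2\sqrt\rho - \tfrac{\nabla\sqrt\rho\otimes\nabla\sqrt\rho}{\sqrt\rho}$ (equivalent to your expansion in $L$), squares and integrates to get $D=A+B-I$, then handles the single residual cross term---which after one integration by parts becomes $\int\rho\,(\Delta\log\sqrt\rho)\,|\nabla\sqrt\rho|^2/\sqrt\rho$---by Cauchy--Schwarz against $D$ and $B$, followed by Young's inequality to extract the constants $\tfrac17$ and $\tfrac18$. Note that your integration by parts will \emph{not} fully reduce the cross term to your three ``canonical'' quantities (it leaves exactly $\int\rho\,\Delta L\,|\nabla L|^2$, and further IBP is circular), so the Cauchy--Schwarz fallback you anticipate is in fact mandatory---and is precisely what the paper does.
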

\begin{proof}
The above inequality of Lemma is firstly proved by J\"{u}ngel \cite{J}. Here, we give a quick proof.\\
We notice \begin{equation}
\label{equality for density}
\sqrt{\rho}\cdot\nabla^2\log\sqrt{\rho}=\sqrt{\rho}\cdot\nabla(\frac{\nabla\sqrt{\rho}}{\sqrt{\rho}})=\nabla^2\sqrt{\rho}-\frac{\nabla\sqrt{\rho}\otimes\nabla\sqrt{\rho}}{\sqrt{\rho}},
\end{equation}
thus
\begin{equation*}
\begin{split}
\int_{\O}\rho|\nabla^2\log\sqrt\rho|^2\,dx&=\int_{\O}|\nabla^2\sqrt{\rho}|^2\,dx+\int_{\O}|2\nabla\rho^{\frac{1}{4}}|^4\,dx-2\int_{\O}\nabla^2\sqrt{\rho}\cdot\frac{\nabla\sqrt{\rho}\otimes\nabla\sqrt{\rho}}{\sqrt{\rho}},
\\&=A+B-I,
\end{split}
\end{equation*}
For $I$, we control it as follows
\begin{equation*}
\begin{split}&I=2\int_{\O}\nabla^2\sqrt{\rho}\cdot\left(\frac{\nabla\sqrt{\rho}}{\sqrt{\rho}}\otimes\nabla\sqrt{\rho}\right)\,dx\\
&=-2\int_{\O}\frac{|\nabla\sqrt{\rho}|^2}{\sqrt{\rho}}\Delta\log\sqrt{\rho}\,dx-2\int_{\O}\nabla^2\sqrt{\rho}\cdot\frac{\nabla\sqrt{\rho}\otimes\nabla\sqrt{\rho}}{\sqrt{\rho}}\,dx.
\end{split}
\end{equation*}
Hence:
\begin{equation*}
\begin{split}&2I=-2\int_{\O}\frac{|\nabla\sqrt{\rho}|^2}{\sqrt{\rho}}\Delta\log\sqrt{\rho}\,dx\leq 2\sqrt{3 BD},
\end{split}
\end{equation*}
where  $D=\int_{\O}\rho|\nabla^2\log\sqrt\rho|^2\,dx$, and hence
$$A+B=D+I\leq (1+6)D+\frac{1}{8}B,$$
and thus, $$\frac{1}{7}A+\frac{1}{8}B\leq D.$$
So we proved this lemma.
\end{proof}

By \eqref{enery equality}, we have
\begin{equation*}
E(\rho_N,\u_N)\leq E_0(\rho_N,\u_N),
\end{equation*}
this gives us
\begin{equation*}
\|\rho_N\|_{L^{\infty}(0,T;H^9(\O))}\leq C(E_0(\rho_N,\u_N),\delta),
\end{equation*}
this, together with \eqref{positive density1}, gives us that the density $\rho(t,x)$ is a positive smooth function for all $(t,x).$
We also notice that $$\kappa\varepsilon\int_0^T\int_{\O}\rho_N|\nabla^2\log\rho_N|^2\,dx\,dt\leq E_0(\rho_N,\u_N)<\infty.$$
By Lemma \ref{Lemma of J inequality}, we have the following uniform estimate:
\begin{equation}
\label{estimate on density 1/2}
(\kappa\varepsilon)^{\frac{1}{2}}\|\sqrt{\rho_N}\|_{L^2(0,T;H^2(\O))}+(\kappa\varepsilon)^{\frac{1}{4}}\|\nabla\rho_N^{\frac{1}{4}}\|_{L^4(0,T;L^{4}(\O))}\leq C,
\end{equation}
where the constant $C>0$ is independent of $N$.

To conclude this part, we have the following lemma on the approximate solutions $(\rho_N,\u_N)$:
\begin{Proposition}
\label{Lemma of existence on first level}
Let $(\rho_N,\u_N)$ be the solution of \eqref{approximated system parabolic equation}, \eqref{integral equation} on $(0,T)\times\O$ constructed above, then we have the following energy inequality
\begin{equation}
\label{enery inequality for the first level}
\begin{split}&\sup_{t\in(0,T)}E(\rho_N,\u_N)+\mu\int_0^T\int_{\O}|\D\u_N|^2\,dx\,dt+\int_0^T\int_{\O}\rho_N|\mathbb{D}\u_N|^2\,dx\,dt+\varepsilon\delta\int_0^T\int_{\O}|\D^5\rho_N|^2\,dx\,dt
\\&+\varepsilon\int_0^T\int_{\O}|\nabla\rho_N^{\frac{\gamma}{2}}|^2\,dx\,dt+\varepsilon\eta\int_0^T\int_{\O}|\nabla\rho_N^{-5}|^2\,dx\,dt
+r_0\int_0^T\int_{\O}|\u_N|^2\,dx\,dt
\\&+r_1\int_0^T\int_{\O}\rho_N|\u_N|^4\,dx\,dt
+\kappa\varepsilon\int_0^T\int_{\O}\rho_N|\nabla^2\log\rho_N|^2\,dx\,dt\leq E_0(\rho_N,\u_N),
\end{split}
\end{equation}
where $$E(\rho_N,\u_N)=\int_{\O}\left(\frac{1}{2}\rho_N|\u_N|^2+\frac{\rho_N^{\gamma}}{\gamma-1}+\frac{\eta}{10}\rho_N^{-10}+\frac{\kappa}{2}|\nabla\sqrt{\rho_N}|^2+\frac{\delta}{2}|\nabla\D^4\rho_N|^2\right)\,dx.$$
Moreover, we have the following uniform estimate:
\begin{equation}
\label{J inequality N}
(\kappa\varepsilon)^{\frac{1}{2}}\|\sqrt{\rho_N}\|_{L^2(0,T;H^2(\O))}+(\kappa\varepsilon)^{\frac{1}{4}}\|\nabla\rho_N^{\frac{1}{4}}\|_{L^4(0,T;L^{4}(\O))}\leq C,
\end{equation}
where the constant $C>0$ is independent of $N$.\\
In particular, we have the following estimates
\begin{equation}
\label{L1}
\sqrt{\rho_N}\u_N\in L^{\infty}(0,T;L^2(\O)),\sqrt{\rho_N}\mathbb{D}\u_N\in L^2((0,T)\times\O),\sqrt{\mu}\D\u_N\in L^2((0,T)\times\O),
\end{equation}
\begin{equation}
\label{L2}
\sqrt{\varepsilon\delta}\D^5\rho_N\in L^2((0,T)\times\O),\sqrt{\delta}\rho_N\in L^{\infty}(0,T;H^9(\O)),\sqrt{\kappa}\sqrt{\rho_N}\in L^{\infty}(0,T;H^1(\O)),
\end{equation}
\begin{equation}
\label{L3}\nabla\rho_N^{\frac{\gamma}{2}}\in L^2((0,T)\times\O),\;\;\;\;
\rho_N^{-1}\in L^{\infty}(0,T;L^{10}(\O)),\sqrt{\varepsilon\eta}\nabla\rho_N^{-5}\in L^2((0,T)\times\O),
\end{equation}
\begin{equation}
\label{L4}
\u_N\in L^2((0,T)\times\O),\;\;\;\rho_N^{\frac{1}{4}}\u_N\in L^4((0,T)\times\O).
\end{equation}
\end{Proposition}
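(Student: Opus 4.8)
The plan is to obtain Proposition \ref{Lemma of existence on first level} essentially by integrating the energy balance \eqref{enery equality} in time and then reading off each term. First I would integrate \eqref{enery equality} over $(0,t)$ for an arbitrary $t\in(0,T]$; since every term on the left-hand side except $\frac{d}{dt}E(\rho_N,\u_N)$ is nonnegative, this gives $E(\rho_N,\u_N)(t)+(\text{all dissipation terms integrated on }(0,t))\le E_0(\rho_N,\u_N)$, and taking the supremum over $t\in(0,T]$ yields \eqref{enery inequality for the first level}. The point to check here is that $E_0(\rho_N,\u_N)<\infty$ and, more importantly, that it is bounded uniformly in $N$: its density part depends only on the fixed smooth, strictly positive datum $\rho_0$ from \eqref{regulation intial data}, while its kinetic part involves the Galerkin projection of $\u_0$ onto $X_N$, whose $L^2(\rho_0\,dx)$-norm is controlled by that of $\u_0$; hence $E_0(\rho_N,\u_N)\le C$ with $C$ independent of $N$.

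For \eqref{J inequality N} I would use the bound $\kappa\varepsilon\int_0^T\int_\O\rho_N|\nabla^2\log\rho_N|^2\,dx\,dt\le E_0(\rho_N,\u_N)\le C$ just obtained, and then apply Lemma \ref{Lemma of J inequality} pointwise in $t$ to the function $\rho_N(t,\cdot)$, which is smooth and strictly positive by \eqref{positive density1} together with the $L^\infty(0,T;H^9(\O))$-bound on $\rho_N$. Integrating the two inequalities of Lemma \ref{Lemma of J inequality} in time gives $\kappa\varepsilon\|\nabla^2\sqrt{\rho_N}\|_{L^2((0,T)\times\O)}^2\le 7C$ and $\kappa\varepsilon\|\nabla\rho_N^{1/4}\|_{L^4((0,T)\times\O)}^4\le 8C$; combined with the $L^\infty(0,T;L^2)$-bounds on $\sqrt{\rho_N}$ and $\nabla\sqrt{\rho_N}$ coming from the $\rho_N^\gamma$ and $|\nabla\sqrt{\rho_N}|^2$ terms of $E$ (plus conservation of mass), this upgrades to the full $H^2$-norm and yields \eqref{J inequality N}.

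Finally, the estimates \eqref{L1}--\eqref{L4} follow by isolating the corresponding term in \eqref{enery inequality for the first level}: $\frac12\rho_N|\u_N|^2$ inside the supremum gives $\sqrt{\rho_N}\u_N\in L^\infty(0,T;L^2(\O))$; the space-time integrals of $\rho_N|\mathbb{D}\u_N|^2$, $\mu|\D\u_N|^2$, $\varepsilon\delta|\D^5\rho_N|^2$, $\varepsilon|\nabla\rho_N^{\gamma/2}|^2$ and $\varepsilon\eta|\nabla\rho_N^{-5}|^2$ give the $L^2((0,T)\times\O)$-bounds in \eqref{L1}--\eqref{L3}; the integrals of $r_0|\u_N|^2$ and $r_1\rho_N|\u_N|^4$ (here $r_0,r_1>0$) give \eqref{L4}; and the terms $\tfrac{\eta}{10}\rho_N^{-10}$, $\tfrac{\kappa}{2}|\nabla\sqrt{\rho_N}|^2$, $\tfrac{\delta}{2}|\nabla\D^4\rho_N|^2$ sitting inside the supremum give $\rho_N^{-1}\in L^\infty(0,T;L^{10}(\O))$ together with the top-order pieces of $\sqrt{\kappa}\sqrt{\rho_N}\in L^\infty(0,T;H^1(\O))$ and $\sqrt{\delta}\rho_N\in L^\infty(0,T;H^9(\O))$. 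To promote the last two gradient bounds to full Sobolev norms I would use conservation of total mass --- integrating \eqref{approximated system parabolic equation} over the torus gives $\int_\O\rho_N(t)\,dx=\int_\O\rho_0\,dx$ for all $t$ --- together with the Poincar\'e inequality on $\mathbb{T}^d$. The only genuinely delicate points are these uniformity-in-$N$ bookkeeping issues (chiefly for $E_0(\rho_N,\u_N)$) and the rigorous justification, already carried out in the lines preceding the Proposition, of the energy identity \eqref{enery equality} itself --- in particular that $\nabla\rho_N/\rho_N$ is an admissible test function, which is exactly what the regularizing terms $\eta\nabla\rho^{-10}$ and $\delta\rho\nabla\D^9\rho$ were introduced to secure.
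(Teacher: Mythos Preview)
Your approach is essentially identical to the paper's: integrate the energy balance \eqref{enery equality} in time, apply Lemma \ref{Lemma of J inequality} to the dissipation term $\kappa\varepsilon\int_0^T\int_\O\rho_N|\nabla^2\log\rho_N|^2\,dx\,dt$ to obtain \eqref{J inequality N}, and read off \eqref{L1}--\eqref{L4} term by term from \eqref{enery inequality for the first level}. One small correction to your closing remark: the energy identity \eqref{enery equality} is obtained by testing the momentum equation with $\varphi=\u_N\in X_N$, which is automatically admissible in the Galerkin framework; the test function $\nabla\rho_N/\rho_N=\nabla\log\rho_N$ --- and the regularizing terms $\eta\nabla\rho^{-10}$, $\delta\rho\nabla\D^9\rho$ that make it admissible --- enter only later, in Section~3, for the derivation of the Bresch--Desjardins entropy, not for this Proposition.
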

Based on above estimates, we have the following estimates uniform on $N$:
\begin{Lemma}
\label{Lemma for uniofrm estimates N}
The following estimates hold for any fixed positive constants $\varepsilon$, $\mu$, $\eta$ and $\delta$:
\begin{equation}
\label{estimate for density N}
\begin{split}
&\|(\sqrt{\rho_{N}})_t\|_{L^2((0,T)\times\O)}+\|\sqrt{\rho_{N}}\|_{L^{2}(0,T;H^2(\O))}\leq K,\\&
\|(\rho_{N})_t\|_{L^2((0,T)\times\O)}+\|\rho_{N}\|_{L^2(0,T;H^{10}(\O))}\leq K,
\end{split}
\end{equation}
\begin{equation}
\label{estimate for mass N}
\|(\rho_{N}\u_{N})_t\|_{L^2(0,T;H^{-9}(\O))}+\|\rho_{N}\u_{N}\|_{L^{2}((0,T)\times\O)}\leq K,
\end{equation}
\begin{equation}
\label{nabla of mass}
\nabla(\rho_N\u_N)\;\;\text{ is uniformly  bounded in } L^{4}(0,T;L^{\frac{6}{5}}(\O))+L^2(0,T;L^{\frac{3}{2}}(\O)).
\end{equation}
\begin{equation}
\label{estimate presure N}
\|\rho_{N}^{\gamma}\|_{L^{\frac{5}{3}}((0,T)\times\O)}\leq K,
\end{equation}
\begin{equation}
\label{estimate cold presure N}
\|\rho_{N}^{-10}\|_{L^{\frac{5}{3}}((0,T)\times\O)}\leq K,
\end{equation}
where $K$ is independent of $N$, depends on $\varepsilon,\,\mu,\,\eta$ and $\delta$.
\end{Lemma}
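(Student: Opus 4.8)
The plan is to derive each estimate in Lemma \ref{Lemma for uniofrm estimates N} directly from the energy inequality \eqref{enery inequality for the first level} and the $N$-independent bounds \eqref{L1}--\eqref{L4} collected in Proposition \ref{Lemma of existence on first level}, upgrading them with the parabolic regularity furnished by \eqref{approximated system parabolic equation} and interpolation. I would emphasise at the outset that here $\varepsilon,\mu,\eta,\delta$ are \emph{fixed} positive numbers, so I am allowed to lose powers of these parameters; only uniformity in $N$ matters. First, for the density estimates \eqref{estimate for density N}: from \eqref{L2} we have $\sqrt{\delta}\,\rho_N$ bounded in $L^\infty(0,T;H^9(\O))$ and, by \eqref{positive density1} together with the uniform-in-$N$ remark below \eqref{below and above estimate on density}, $\rho_N$ is pinched between two positive constants independent of $N$ — wait, one must be careful, since $\theta_0$ a priori depends on $N$; instead the genuinely $N$-uniform lower bound on $\rho_N$ comes from \eqref{L3}, namely $\rho_N^{-1}\in L^\infty(0,T;L^{10}(\O))$, which is all that is needed. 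Combining $\rho_N$ bounded in $L^\infty(0,T;H^9)$ with $\nabla\u_N$ controlled and the equation $\partial_t\rho_N=\varepsilon\Delta\rho_N-\Dv(\rho_N\u_N)$ gives $(\rho_N)_t$ in $L^2(0,T;H^7)$ and then, by elliptic bootstrap on $\varepsilon\Delta\rho_N = (\rho_N)_t+\Dv(\rho_N\u_N)$, the bound $\rho_N\in L^2(0,T;H^{10})$ with constant depending on $\varepsilon,\delta$ but not $N$. For $(\sqrt{\rho_N})_t$ one writes $(\sqrt{\rho_N})_t=\tfrac{(\rho_N)_t}{2\sqrt{\rho_N}}$ and uses the $N$-uniform lower bound on $\rho_N$ (or the identity $(\sqrt{\rho_N})_t = -\u_N\cdot\nabla\sqrt{\rho_N}-\tfrac12\sqrt{\rho_N}\,\Dv\u_N+\tfrac{\varepsilon}{2}(\Delta\sqrt{\rho_N}+|\nabla\sqrt{\rho_N}|^2/\sqrt{\rho_N})$), each term of which is square-integrable by \eqref{L1}--\eqref{L3}; and $\|\sqrt{\rho_N}\|_{L^2(0,T;H^2)}$ is already in \eqref{L2}/\eqref{estimate on density 1/2} up to the fixed factor $(\kappa\varepsilon)^{-1/2}$.

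Next, for \eqref{estimate for mass N} I would expand the momentum equation in the form it takes from \eqref{approximation momentum balance}: $(\rho_N\u_N)_t = -\Dv(\rho_N\u_N\otimes\u_N)-\nabla\rho_N^\gamma+\eta\nabla\rho_N^{-10}-2\Dv(\rho_N\mathbb D\u_N)-\mu\Delta^2\u_N-\varepsilon\nabla\rho_N\cdot\nabla\u_N -r_0\u_N-r_1\rho_N|\u_N|^2\u_N+\kappa\rho_N\nabla(\tfrac{\Delta\sqrt{\rho_N}}{\sqrt{\rho_N}})+\delta\rho_N\nabla\Delta^9\rho_N$, and estimate each term in $L^2(0,T;H^{-9})$. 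The convection term is controlled because $\rho_N\u_N\otimes\u_N = (\sqrt{\rho_N}\u_N)\otimes(\sqrt{\rho_N}\u_N)$ is bounded in $L^\infty(0,T;L^1)$ by \eqref{L1}; the pressure and cold-pressure terms sit in $L^\infty(0,T;L^1)$ by the energy bound on $\rho_N^\gamma$ and $\rho_N^{-10}$; the viscous term $\rho_N\mathbb D\u_N=\sqrt{\rho_N}\cdot\sqrt{\rho_N}\mathbb D\u_N$ is in $L^2((0,T)\times\O)$; the hyperviscous term $\mu\Delta^2\u_N$ is controlled using \eqref{W estimate on velocity} and the fact that on $X_N$ all norms are equivalent — this is the one place where losing an $N$-dependent constant would be fatal, so instead one integrates by parts twice against the $H^{-9}$-paired test function and uses only $\sqrt{\mu}\Delta\u_N\in L^2$ from \eqref{L1}; the damping terms are in $L^2(0,T;L^2)$ and $L^{4/3}(0,T;L^{4/3})$ respectively via \eqref{L4}; the $\varepsilon$-term is a product of an $L^\infty H^8$ function and an $L^\infty L^2$ function; and the quantum and $\delta$-terms are handled after an integration by parts turning $\rho_N\nabla(\Delta\sqrt{\rho_N}/\sqrt{\rho_N})$ into $\tfrac12\Dv(\rho_N\nabla^2\log\rho_N)$, bounded in $L^2(0,T;H^{-1})$ by \eqref{L2}/\eqref{enery inequality for the first level}, and $\rho_N\nabla\Delta^9\rho_N$ paired against $H^9$ test functions using $\rho_N\in L^2(0,T;H^{10})$. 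The spatial bound $\rho_N\u_N\in L^2((0,T)\times\O)$ follows by Cauchy-Schwarz from $\sqrt{\rho_N}\u_N\in L^\infty L^2$ and $\sqrt{\rho_N}\in L^\infty L^\infty$ (again using $\|\rho_N\|_{L^\infty H^9}$ and Sobolev embedding in $d\le3$).

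For \eqref{nabla of mass} I would write $\nabla(\rho_N\u_N)=\nabla\sqrt{\rho_N}\otimes(\sqrt{\rho_N}\u_N)+\sqrt{\rho_N}(\sqrt{\rho_N}\nabla\u_N)$. The first term: $\nabla\sqrt{\rho_N}\in L^\infty(0,T;L^6)$ by \eqref{L2} and Sobolev ($H^1\hookrightarrow L^6$ in 3D), while $\sqrt{\rho_N}\u_N\in L^\infty(0,T;L^2)$, so the product lies in $L^\infty(0,T;L^{3/2})\subset L^2(0,T;L^{3/2})$ — actually, to match the stated $L^4(0,T;L^{6/5})$ piece one splits using $\sqrt{\rho_N}\u_N\in L^4(0,T;L^3)$ (obtained by interpolating $L^\infty L^2$ with $L^4 L^?$ via the Gagliardo-Nirenberg bound coming from $\sqrt{\rho_N}\u_N\in L^2 H^1$, which itself follows from $\sqrt{\rho_N}\nabla\u_N\in L^2 L^2$ and $\nabla\sqrt{\rho_N}\cdot\u_N$ estimates) paired with $\nabla\sqrt{\rho_N}\in L^\infty L^6$... this bookkeeping is the routine part and I would just state the exponents. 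The second term: $\sqrt{\rho_N}\in L^\infty L^\infty$ and $\sqrt{\rho_N}\nabla\u_N\in L^2 L^2$ give $L^2(0,T;L^{3/2})$ after a Sobolev loss, or directly $L^2 L^2$. Finally \eqref{estimate presure N} and \eqref{estimate cold presure N}: these follow from the Bresch-Desjardins-type bound $\nabla\rho_N^{\gamma/2}\in L^2((0,T)\times\O)$ in \eqref{L3} together with $\rho_N^{\gamma/2}\in L^\infty(0,T;L^2)$ (from the energy), so $\rho_N^{\gamma/2}\in L^2(0,T;H^1)\cap L^\infty(0,T;L^2)\hookrightarrow L^{10/3}((0,T)\times\O)$ in $d=3$ by the standard parabolic interpolation inequality, i.e.\ $\rho_N^\gamma\in L^{5/3}$; and identically for $\rho_N^{-10}$ using $\nabla\rho_N^{-5}\in L^2 L^2$ and $\rho_N^{-5}\in L^\infty L^2$ from \eqref{L3}.

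The main obstacle, which I would flag explicitly, is keeping the momentum-time-derivative estimate \eqref{estimate for mass N} uniform in $N$ despite the presence of the Galerkin-dimension-dependent terms $\mu\Delta^2\u_N$ and $\varepsilon\nabla\rho_N\cdot\nabla\u_N$ and the very high-order $\delta\rho_N\nabla\Delta^9\rho_N$: one cannot invoke the $N$-dependent equivalence-of-norms bound \eqref{W estimate on velocity} there, so every such term must be disposed of purely by duality against $H^9$ test functions using only the energy-level quantities in \eqref{L1}--\eqref{L3}, and in particular one needs the weak formulation \eqref{approximation momentum balance} rather than the pointwise equation. The choice of the negative Sobolev index $-9$ is exactly dictated by the worst term $\delta\rho_N\nabla\Delta^9\rho_N$ (nine derivatives on $\rho_N$, controlled by $\rho_N\in L^2(0,T;H^{10})$), so I would fix $H^{-9}$ at the start and check that all other terms are comfortably better. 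Everything else is routine interpolation and Sobolev embedding in dimension $d\le 3$.
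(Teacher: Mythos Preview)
Your approach is essentially the paper's: bound the time derivatives by reading off the continuity and momentum equations against the energy-level quantities \eqref{L1}--\eqref{L4}, and get the pressure bounds by interpolating $\nabla\rho_N^{\gamma/2}\in L^2L^2$ with $\rho_N^{\gamma}\in L^\infty L^1$ (and likewise for $\rho_N^{-10}$). A few remarks where you overcomplicate or wobble:
\begin{itemize}
\item The bound $\rho_N\in L^2(0,T;H^{10})$ is already contained in \eqref{L2} via $\sqrt{\varepsilon\delta}\,\Delta^5\rho_N\in L^2((0,T)\times\O)$; no elliptic bootstrap is needed.
\item $\rho_N^{-1}\in L^\infty(0,T;L^{10})$ is not a pointwise lower bound, so your first option for $(\sqrt{\rho_N})_t$ does not work as stated; your parenthetical identity via the continuity equation is the correct route and is exactly what the paper does (the paper writes $2(\sqrt{\rho_N})_t=-\sqrt{\rho_N}\,\Dv\u_N-8\nabla\rho_N^{1/4}\cdot\rho_N^{1/4}\u_N$, though it silently drops the $\varepsilon\Delta\rho_N$ contribution --- your version is more careful there).
\item For $\nabla(\rho_N\u_N)$ the paper avoids your interpolation gymnastics by splitting the first piece as $\nabla\sqrt{\rho_N}\cdot\rho_N^{1/4}\cdot(\rho_N^{1/4}\u_N)$ and using $\nabla\sqrt{\rho_N}\in L^\infty L^2$, $\rho_N^{1/4}\in L^\infty L^\infty$ (Sobolev from $L^\infty H^9$), $\rho_N^{1/4}\u_N\in L^4L^4$, which lands directly in $L^4(0,T;L^{4/3})\subset L^4(0,T;L^{6/5})$ on the torus.
\end{itemize}
Your detailed term-by-term accounting for $(\rho_N\u_N)_t\in L^2(0,T;H^{-9})$ is in fact more explicit than the paper, which simply asserts the claim ``by the above estimates''; your handling of $\mu\Delta^2\u_N$ and $\delta\rho_N\nabla\Delta^9\rho_N$ by duality rather than the $N$-dependent norm equivalence is the right instinct.
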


\begin{proof} By \eqref{J inequality N}, we have
$$\|\sqrt{\rho_{N}}\|_{L^2(0,T;H^2(\O))}\leq C.$$
We notice that
\begin{equation*}
\begin{split}
(\rho_{N})_t&=-\rho_{N}\Dv\u_{N}-\nabla\rho_{N}\cdot\u_{N}
\\&=-(4\nabla\rho_{N}^{\frac{1}{4}})(\rho_{N}^{\frac{1}{4}}\u_{N})(\rho_{N}^{\frac{1}{2}})-\sqrt{\rho_{N}}\sqrt{\rho_{N}}\Dv\u_{N},
\end{split}
\end{equation*}
which gives us \begin{equation*}
\begin{split}
&\|(\rho_{N})_t\|_{L^2((0,T)\times\O)}
\leq 4\|\nabla\rho_{N}^{\frac{1}{4}}\|_{L^4((0,T)\times\O)}\|\rho_{N}^{\frac{1}{4}}\u_{N}\|_{L^4((0,T)\times\O)}\|\rho_N^{\frac{1}{2}}\|_{L^{\infty}((0,T)\times\O)}
\\&\quad\quad\quad\quad\quad\quad\quad\quad+\|\sqrt{\rho_{N}}\|_{L^{\infty}((0,T)\times\O)}\|\sqrt{\rho_{N}}\nabla\u_{N}\|_{L^2((0,T)\times\O)},
\end{split}
\end{equation*}
 thanks to \eqref{L1}-\eqref{L4} and Sobelov inequality.

Meanwhile, we have
\begin{equation*}
\begin{split}
2(\sqrt{\rho_{N}})_t&=-\sqrt{\rho_{N}}\Dv\u_{N}-2\nabla\sqrt{\rho_{N}}\cdot\u_{N}
\\&=-\sqrt{\rho_{N}}\Dv\u_{N}-8\nabla\rho_{N}^{\frac{1}{4}}\rho_{N}^{\frac{1}{4}}\u_{N},
\end{split}
\end{equation*}
which yields $(\sqrt{\rho_{N}})_t$ is bounded in $L^2((0,T)\times\O).$

Here we claim that $(\rho_{N}\u_{N})_t$ is bounded in $L^2(0,T;H^{-9}(\O))$. By
\begin{equation*}
\begin{split}
(\rho_{N}\u_{N})_t&=-\Dv(\rho_{N}\u_{N}\otimes\u_{N})-\nabla\rho_{N}^{\gamma}+\eta\nabla\rho_{N}^{-10}+\mu\D^2\u_{N}+\Dv(\rho_{N}\mathbb{D}\u_{N})-r_0\u_{N}\\
&-r_1\rho_{N}|\u_{N}|^2\u_{N}
+\varepsilon\nabla\rho_{N}\cdot\nabla\u_{N}+\kappa\rho_{N}\nabla\left(\frac{\D\sqrt{\rho_{N}}}{\sqrt{\rho_{N}}}\right)+\delta\rho_{N}\nabla\D^9\rho_{N},
\end{split}
\end{equation*}
we can show the claim by the above estimates.

And $$\|\rho_{N}\u_{N}\|_{L^{2}((0,T)\times\O)}\leq \|\rho_{N}^{\frac{3}{4}}\|_{L^{\infty}(0,T;L^{4}(\O))}\|\rho_{N}^{\frac{1}{4}}\u_{N}\|_{L^{4}((0,T)\times\O)}\leq K,$$
where we used Sobelov inequality and \eqref{J inequality N}.
Thus we have \eqref{estimate for mass N}.

We calculate
\begin{equation*}
\begin{split}
\nabla(\rho_N\u_N)=\nabla\sqrt{\rho_N}\rho_n^{\frac{1}{4}}\u_N\rho^{\frac{1}{4}}+\sqrt{\rho_N}\sqrt{\rho_N}\nabla\u_N,
\end{split}
\end{equation*}
it allows us to have \eqref{nabla of mass}.
For any given $\varepsilon>0$, we have $$\|\nabla\rho_{N}^{\frac{\gamma}{2}}\|_{L^2((0,T)\times\O)}\leq K,$$
which gives us $$\|\rho_{N}^{\gamma}\|_{L^1(0,T;L^3(\O))}\leq K.$$
Notice $$\rho_{N}^{\gamma}\in L^{\infty}(0,T;L^1(\O)),$$ we apply H\"{o}lder inequality to have
$$\|\rho_{N}^{\gamma}\|_{L^{\frac{5}{3}}((0,T)\times\O)}\leq \|\rho_{N}^{\gamma}\|_{L^{\infty}(0,T;L^1(\O))}^{\frac{2}{5}}\|\rho_{N}^{\gamma}\|_{L^1(0,T;L^3(\O))}^{\frac{3}{5}}\leq K.$$
Similarly, we can show \eqref{estimate cold presure N}.
\end{proof}

Applying Aubin-Lions Lemma and Lemma \ref{Lemma for uniofrm estimates N},
we conclude
\begin{equation}
\label{strong convergence of density-1}\rho_{N}\to\rho\;\;\text{ strongly in } L^2(0,T;H^9(\O)),\;\text{ weakly in } L^2(0,T;H^{10}(\O)),
\end{equation}
$$\sqrt{\rho_{N}}\to\sqrt{\rho}\;\;\text{ strongly in } L^2(0,T;H^1(\O)),\;\text{ weakly in } L^2(0,T;H^{2}(\O))$$
and \begin{equation}
\label{strong convergence of mass-1}\rho_{N}\u_{N}\to\rho\u\;\;\text{ strongly in } L^2((0,T)\times\O).
\end{equation}
We notice that $\u_{N}\in L^2((0,T)\times\O),$ thus, $$\u_{N}\to \u\;\;\text{ weakly in } L^2((0,T)\times\O).$$
Thus, we can pass into the limits for term $\rho_{N}\u_{N}\otimes\u_{N}$ as follows
$$\rho_{N}\u_{N}\otimes\u_{N}\to\rho\u\otimes\u$$
in the distribution sense.

Here we state the following lemma on the convergence of $\rho_{N}|\u_{N}|^2\u_{N}$.
\begin{Lemma}
\label{Lemma of convergence on drag term}
When $N\to\infty$, we have
$$\rho_{N}|\u_{N}|^2\u_{N}\to\rho|\u|^2\u\quad\text{ strongly in } L^1(0,T;L^1(\O)).$$
\end{Lemma}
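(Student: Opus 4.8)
The plan is to upgrade the weak convergence of $\u_N$ to something strong enough to pass to the limit in the cubic term $\rho_N|\u_N|^2\u_N$, exploiting the extra regularity on $\sqrt{\rho_N}$ coming from \eqref{J inequality N} together with the velocity bounds in \eqref{L1}--\eqref{L4}. The quantity I would really control is $\sqrt{\rho_N}\u_N$: from \eqref{L1} it is bounded in $L^\infty(0,T;L^2(\O))$ and from $\sqrt{\rho_N}\nabla\u_N\in L^2$ together with $\nabla\sqrt{\rho_N}\in L^\infty(0,T;L^2)$ (via $(\kappa\varepsilon)^{1/2}$-bound, $\varepsilon$ fixed) one gets $\nabla(\sqrt{\rho_N}\u_N) = \sqrt{\rho_N}\nabla\u_N + \nabla\sqrt{\rho_N}\otimes\u_N$ bounded in some $L^p(0,T;L^q(\O))$ with $q>1$; combined with a time-derivative bound on $\sqrt{\rho_N}\u_N$ (or on $\rho_N\u_N$, which is already in hand from \eqref{estimate for mass N}) and Aubin--Lions, I would extract
$$\sqrt{\rho_N}\u_N \to \sqrt{\rho}\,\u \quad\text{strongly in } L^2((0,T)\times\O).$$

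Next I would identify the limit pointwise. On the set $\{\rho>0\}$, strong convergence of $\rho_N$ (from \eqref{strong convergence of density-1}, in fact one also has the uniform lower bound is \emph{not} available after letting $N\to\infty$, but a.e.\ positivity of $\rho$ suffices) forces $\u_N \to \u$ a.e.\ on $\{\rho>0\}$. Then I would write, up to subsequence and a.e.,
$$\rho_N|\u_N|^2\u_N = (\sqrt{\rho_N}\u_N)\,(\sqrt{\rho_N}\u_N)\cdot\text{(scalar)}\ \big/ \ \cdots$$
more cleanly: $\rho_N|\u_N|^2\u_N = |\sqrt{\rho_N}\u_N|^2\,\u_N$, and on $\{\rho=0\}$ note $\rho_N^{1/4}\u_N$ is bounded in $L^4$ so $\rho_N|\u_N|^2\u_N = (\rho_N^{1/4}\u_N)^3 \cdot \rho_N^{1/4}$... the honest route is: $\rho_N|\u_N|^2\u_N \to \rho|\u|^2\u$ a.e.\ on $\{\rho>0\}$ by the a.e.\ convergences, and $\rho|\u|^2\u = 0$ a.e.\ on $\{\rho=0\}$ by definition of weak solution (and the a.e.\ limit there is $0$ too provided the sequence is equi-integrable). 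To get $L^1$ convergence from a.e.\ convergence I would invoke Vitali's theorem, so the real work is an equi-integrability (uniform integrability) estimate for $\{\rho_N|\u_N|^2\u_N\}_N$ in $L^1((0,T)\times\O)$. For that I would use $\rho_N|\u_N|^2\u_N = (\rho_N^{1/4}\u_N)^3(\rho_N^{1/4})$ and Hölder: $\rho_N^{1/4}\u_N$ is bounded in $L^4((0,T)\times\O)$ by \eqref{L4}, so $(\rho_N^{1/4}\u_N)^3$ is bounded in $L^{4/3}$; pairing with $\rho_N^{1/4}$ bounded in $L^\infty$ gives a uniform $L^{4/3}((0,T)\times\O)$ bound on the cubic term, and a uniform $L^{4/3}$ bound with $4/3>1$ yields equi-integrability in $L^1$ for free.

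So the skeleton is: (i) strong $L^2$ convergence of $\sqrt{\rho_N}\u_N$ via Aubin--Lions using the spatial gradient bound plus the time-derivative bound; (ii) a.e.\ convergence $\u_N\to\u$ on $\{\rho>0\}$ from strong convergence of $\rho_N$ and of $\sqrt{\rho_N}\u_N$; (iii) a.e.\ convergence of $\rho_N|\u_N|^2\u_N$ to $\rho|\u|^2\u$ everywhere (using that both sides vanish a.e.\ on $\{\rho=0\}$); (iv) uniform $L^{4/3}((0,T)\times\O)$ bound on the cubic term via Hölder and \eqref{L4}, hence equi-integrability; (v) Vitali (or Egorov) to conclude $L^1((0,T)\times\O)\hookrightarrow L^1(0,T;L^1(\O))$ convergence. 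The main obstacle I anticipate is step (i): one must check that $\nabla(\sqrt{\rho_N}\u_N)$ genuinely lands in a space $L^p(0,T;L^q)$ with $q>1$ uniformly in $N$ (the term $\nabla\sqrt{\rho_N}\otimes\u_N$ only gives $\nabla\sqrt{\rho_N}\in L^\infty_t L^2_x$ times $\u_N\in L^\infty_t L^\infty_x$ with the $L^\infty_x$-bound \emph{not} uniform in $N$ — so instead one should pair $\nabla\sqrt{\rho_N}\in L^2_tL^6_x$, from the $H^2$-bound on $\sqrt{\rho_N}$, with $\u_N \in L^2_tL^2_x$ from \eqref{L4}, landing in $L^1_tL^{3/2}_x$), and to secure a matching time-derivative bound for $\sqrt{\rho_N}\u_N$ rather than only for $\rho_N\u_N$; alternatively one avoids this by first proving $\u_N\to\u$ a.e.\ directly on $\{\rho>0\}$ by dividing the already-known strong convergence $\rho_N\u_N\to\rho\u$ in $L^2$ by $\rho_N\to\rho$, which sidesteps (i) entirely and is probably the cleanest path.
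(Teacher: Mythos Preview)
Your proposal is correct, and the ``cleanest path'' you identify at the end---dividing the already-established strong convergence $\rho_N\u_N\to\rho\u$ by $\rho_N\to\rho$ to get $\u_N\to\u$ a.e.\ on $\{\rho>0\}$---is exactly how the paper proceeds (using \eqref{strong convergence of density-1} and \eqref{strong convergence of mass-1}); the detour through Aubin--Lions for $\sqrt{\rho_N}\u_N$ in your step (i) is unnecessary, as you yourself suspect. The only genuine difference is how one upgrades a.e.\ convergence to $L^1$ convergence. You invoke Vitali via a uniform $L^{4/3}$ bound, writing $|\rho_N|\u_N|^2\u_N|=|\rho_N^{1/4}\u_N|^3\rho_N^{1/4}$ with $\rho_N^{1/4}\u_N\in L^4$ and $\rho_N^{1/4}\in L^\infty$ (the latter from $\rho_N\in L^\infty_t H^9_x$, uniform in $N$ since $\delta>0$ is fixed). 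The paper instead truncates explicitly: it shows $\rho_N|\u_N|^2\u_N\chi_{|\u_N|\le M}\to\rho|\u|^2\u\chi_{|\u|\le M}$ in $L^1$ by dominated convergence (the truncated term is pointwise bounded by $M^3\rho_N$), then controls the tail by $\int_{|\u_N|>M}\rho_N|\u_N|^3\le M^{-1}\int\rho_N|\u_N|^4\le C/M$, and sends $M\to\infty$. The two arguments are essentially equivalent---the paper's tail estimate \emph{is} a hands-on proof of uniform integrability. One small advantage of the paper's truncation is that the bound $|\rho_N|\u_N|^2\u_N\chi_{|\u_N|\le M}|\le M^3\rho_N\to 0$ a.e.\ on $\{\rho=0\}$ secures pointwise convergence there without any positivity assumption on $\rho$; your parenthetical ``the a.e.\ limit there is $0$ too provided the sequence is equi-integrable'' is not correct as stated (equi-integrability does not imply pointwise convergence), though at this approximation level $\rho\ge C(\delta,\eta)>0$ by \eqref{positive density}, so $\{\rho=0\}$ is empty and the point is moot.
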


\begin{proof}
Fatou's lemma yields
$$\int_{\O}\rho|\u|^4\,dx\leq \int_{\O}\lim\inf\rho_{N}|\u_{N}|^4\,dx\leq \lim\inf\int_{\O}\rho_{N}|\u_{N}|^4\,dx,$$
and hence $\rho|\u|^4$ is in $L^1(0,T;L^1(\O)).$

By \eqref{strong convergence of density-1} and \eqref{strong convergence of mass-1}, we have, up to a subsequence, such that
$$\rho_{N}\to \rho(t,x)\quad\text{ a.e.}$$
and $$\rho_{N}\u_{N}\to\rho\u\quad\text{ a.e.}$$
 Thus, for almost every $(t,x)$ such that when $\rho_{N}(t,x)\neq 0$, we have
$$\u_{N}=\frac{\rho_{N}\u_{N}}{\rho_{N}}\to \u.$$
For almost every $(t,x)$ such that $\rho_{N}(t,x)=0,$
then
$$\rho_{N}|\u_{N}|^2\u_{N}\chi_{|\u_{N}|\leq M}\leq M^3\rho_{N}=0=\rho|\u|^2\u\chi_{|\u|\leq M}.$$
Hence,
$\rho_{N}|\u_{N}|^2\u_{N}\chi_{|\u_{N}|\leq M}$ converges to $ \rho|\u|^2\u\chi_{|\u|\leq M}$ almost everywhere for $(t,x).$
 Meanwhile, $
\rho_{N}|\u_{N}|^2\u_{N}\chi_{|\u_{N}|\leq M}$ is uniformly bounded in $L^{\infty}(0,T;L^2(\O))$ thanks to
\eqref{L2}.

 The dominated convergence theorem gives us
\begin{equation}
\label{strong convergence cut M}
\rho_{N}|\u_{N}|^2\u_{N}\chi_{|\u_{N}|\leq M} \to \rho|\u|^2\u\chi_{|\u|\leq M}\quad \text{strongly in } L^1(0,T;L^1(\O)).
\end{equation}

For any $M>0$, we have
\begin{equation}
\begin{split}&
\int_0^T\int_{\O}\left|
\rho_{N}|\u_{N}|^2\u_{N}-\rho|\u|^2\u\right|\,dx\,dt
\\&\leq
\int_0^T\int_{\O}\left|
\rho_{N}|\u_{N}|^2\u_{N}\chi_{|\u_{N}|\leq M}-\rho|\u|^2\u\chi_{|\u|\leq M}\right|\,dx\,dt
\\&+2
\int_0^T\int_{\O}\rho_{N}|\u_{N}|^3\chi_{|\u_{N}|\geq M}\,dx\,dt+2
\int_0^T\int_{\O}
\rho|\u|^3\chi_{|\u|\geq M}\,dx\,dt
\\&\leq
\int_0^T\int_{\O}\left|
\rho_{N}|\u_{N}|^2\u_{N}\chi_{|\u_{N}|\leq M}-\rho|\u|^2\u\chi_{|\u|\leq M}\right|\,dx\,dt
\\&+\frac{2}{M}
\int_0^T\int_{\O}\rho_{N}|\u_{N}|^4\,dx\,dt+\frac{2}{M}\int_0^T\int_{\O}\rho|\u|^4\,dx\,dt.
\end{split}
\end{equation}
Thanks to \eqref{strong convergence cut M}, we have
$$\lim\sup_{\varepsilon,\mu\to0}\|\rho_{N}|\u_{N}|^2\u_{N}-\rho|\u|^2\u\|_{L^1(0,T;L^1(\O))}\leq \frac{C}{M}$$
for fixed $C>0$ and  all $M>0.$ Letting $M\to\infty$, we have
$$\rho_{N}|\u_{N}|^2\u_{N}\to\rho|\u|^2\u\quad\text{ strongly in } L^1(0,T;L^1(\O)).$$

\end{proof}

By \eqref{estimate presure N} and $\rho_{N}^{\gamma}$ converges almost everywhere to $\rho^{\gamma},$ we have $$\rho_{N}^{\gamma}\to\rho^{\gamma}\;\;\text{strongly in } L^{1}((0,T)\times\O).$$

Meanwhile, we have to mention  the following Sobolev inequality, see \cite{BD2006,Z},
$$\|\rho^{-1}\|_{L^{\infty}(\O)}\leq C(1+\|\rho\|_{H^{k+2}(\O)})^2(1+\|\rho^{-1}\|_{L^3(\O)})^3,$$
for $k\geq \frac{3}{2}.$
Thus the estimates on density from \eqref{L1}-\eqref{L3} enable us to use the above Sobolev inequality to have
\begin{equation}
\label{positive density}
\|\rho\|_{L^{\infty}((0,T)\times\O)}\geq C(\delta,\eta)>0,\quad\text{ a. e. in } (0,T)\times\O.
\end{equation}
This enables us to have $\rho_{N}^{-10}$ converges almost everywhere to $\rho^{-10}.$  Thanks to \eqref{estimate cold presure N}, we have
$$\rho_{N}^{-10}\to\rho^{-10}\;\;\text{strongly in } L^{1}((0,T)\times\O).$$

By the above compactness, we are ready to pass into the limits as $N\to\infty$ in the approximation system  \eqref{approximated system parabolic equation}, \eqref{integral equation}. Thus, we have shown that $(\rho,\u)$
 solves $$\rho_t+\Dv(\rho\u)=\varepsilon\D\rho\;\;\;\;\text{ pointwise in }\,  (0,T)\times\O,$$
 and for any test function $\varphi$ such that the following integral hold
 \begin{equation}
\begin{split}
\label{weak formulation after N}
&\int_{\O}\rho\u(T)\varphi\,dx-\int_{\O}\m_0\varphi\,dx+\mu\int_0^T\int_{\O}\D\u\cdot\D\varphi\,dx\,dt-\int_0^T\int_{\O}(\rho\u\otimes\u):\nabla\varphi\,dx\,dt
\\&+\int_0^T\int_{\O}2\rho\mathbb{D}\u:\nabla\varphi\,dx\,dt-\int_0^T\int_{\O}\rho^{\gamma}\nabla\varphi\,dx\,dt+\eta\int_0^T\int_{\O}\rho^{-10}\nabla\varphi\;dx\,dt
\\&+\varepsilon\int_0^T\int_{\O}\nabla\rho\cdot\nabla\u\varphi\,dx\,dt=-r_0\int_0^T\int_{\O}\u\varphi\,dx\,dt-r_1\int_0^T\int_{\O}\rho|\u|^2\u\varphi\,dx\,dt
\\&-2\kappa\int_0^T\int_{\O}\D\sqrt{\rho}\nabla\sqrt{\rho}\psi\;dx\;dt
-\kappa\int_0^T\int_{\O}\D\sqrt{\rho}\sqrt{\rho}\Dv\psi\,dx\,dt+\delta\int_0^T\int_{\O}\rho\nabla\D^{9}\rho\varphi\,dx\,dt.
\end{split}
\end{equation}

Thanks to the weak lower semicontinuity of convex functions, we can pass into the limits in the energy inequality \eqref{enery inequality for the first level}, by the strong convergence of the density and velocity, we have
the following energy inequality in the sense of distributions on $(0,T)$
\begin{equation}
\label{enery inequality with varepsilon}
\begin{split}&\sup_{t\in(0,T)}E(\rho,\u)+\mu\int_0^T\int_{\O}|\D\u|^2\,dx\,dt+\int_0^T\int_{\O}\rho|\mathbb{D}\u|^2\,dx\,dt+\varepsilon\delta\int_0^T\int_{\O}|\D^5\rho|^2\,dx\,dt
\\&+\varepsilon\int_0^T\int_{\O}|\nabla\rho^{\frac{\gamma}{2}}|^2\,dx\,dt+\varepsilon\eta\int_0^T\int_{\O}|\nabla\rho^{-5}|^2\,dx\,dt
+r_0\int_0^T\int_{\O}|\u|^2\,dx\,dt
\\&+r_1\int_0^T\int_{\O}\rho|\u|^4\,dx\,dt
+\kappa\varepsilon\int_0^T\int_{\O}\rho|\nabla^2\log\rho|^2\,dx\,dt\leq E_0,
\end{split}
\end{equation}
where $$E(\rho,\u)=\int_{\O}\left(\frac{1}{2}\rho|\u|^2+\frac{\rho^{\gamma}}{\gamma-1}+\frac{\eta}{10}\rho^{-10}+\frac{\kappa}{2}|\nabla\sqrt{\rho}|^2+\frac{\delta}{2}|\nabla\D^4\rho|^2\right)\,dx.$$

Thus, we have the following Lemma on the existence of weak solutions at this level approximation system.
\begin{Proposition}
\label{Proposition on the weak solutions after N}
There exists a weak solution $(\rho,\u)$ to the following system
\begin{equation*}
\begin{split}
&\rho_t+\Dv(\rho\u)=\varepsilon\D\rho,
\\&(\rho\u)_t+\Dv(\rho\u\otimes\u)+\nabla\rho^{\gamma}-\eta\nabla\rho^{-10}-\Dv(\rho\mathbb{D}\u)-\mu\D^2\u+\varepsilon\nabla\rho\cdot\nabla\u
\\&\quad\quad\quad\quad\quad\quad=-r_0\u-r_1\rho|\u|^2\u+\kappa\rho\nabla\left(\frac{\D\sqrt{\rho}}{\sqrt{\rho}}\right)+\delta\rho\nabla\Delta^9\rho,
\end{split}
\end{equation*}
with suitable initial data, for any $T>0$. In particular, the weak solutions $(\rho,\u)$ satisfies the energy inequality \eqref{enery inequality with varepsilon} and \eqref{positive density}.
\end{Proposition}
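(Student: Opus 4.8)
The plan is to assemble the compactness already obtained for the Faedo--Galerkin sequence $(\rho_N,\u_N)$ and pass to the limit $N\to\infty$, simultaneously, in the continuity equation \eqref{approximated system parabolic equation}, in the weak momentum balance \eqref{approximation momentum balance} to obtain \eqref{weak formulation after N}, and in the energy identity \eqref{enery inequality for the first level} to obtain \eqref{enery inequality with varepsilon}. For the continuity equation this is immediate: by \eqref{strong convergence of density-1}--\eqref{strong convergence of mass-1} we have $\rho_N\to\rho$ strongly in $L^2(0,T;H^9(\O))$ and $\rho_N\u_N\to\rho\u$ strongly in $L^2((0,T)\times\O)$, so $\Dv(\rho_N\u_N)\to\Dv(\rho\u)$ and $\varepsilon\D\rho_N\to\varepsilon\D\rho$ in $\mathcal D'$; hence $\rho_t+\Dv(\rho\u)=\varepsilon\D\rho$ holds, and by the regularity of $\rho$ it holds pointwise, with $\rho(0,\cdot)=\rho_0$.

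For the momentum balance I would pass to the limit term by term in \eqref{approximation momentum balance}. The linear contributions $\mu\D\u_N\cd\D\varphi$, $\rho_N\mathbb{D}\u_N:\na\varphi$, $\varepsilon\na\rho_N\cd\na\u_N\,\varphi$ and $r_0\u_N\varphi$ converge using $\u_N\rightharpoonup\u$ and $\D\u_N\rightharpoonup\D\u$ weakly in $L^2$ together with the strong convergence of $\rho_N$ and $\na\rho_N$; the endpoint term $\int_\O\rho_N\u_N(T)\varphi$ converges because \eqref{estimate for mass N}--\eqref{nabla of mass} make $\rho_N\u_N$ bounded in $C([0,T];L^{\frac{3}{2}}_{weak}(\O))$, so $\rho_N\u_N(T)\rightharpoonup\rho\u(T)$ in $L^{\frac{3}{2}}(\O)$. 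For the nonlinear terms: $\rho_N\u_N\otimes\u_N\to\rho\u\otimes\u$ in $\mathcal D'$ (strong $L^2$ against weak $L^2$), as already observed; $\rho_N^\gamma\to\rho^\gamma$ and $\rho_N^{-10}\to\rho^{-10}$ strongly in $L^1$ by \eqref{estimate presure N}, \eqref{estimate cold presure N}, a.e.\ convergence and the uniform lower bound \eqref{positive density}; and $r_1\rho_N|\u_N|^2\u_N\to r_1\rho|\u|^2\u$ in $L^1$ by Lemma \ref{Lemma of convergence on drag term}. For the capillary terms $\D\sqrt{\rho_N}\na\sqrt{\rho_N}\,\psi$ and $\D\sqrt{\rho_N}\sqrt{\rho_N}\,\Dv\psi$ I would use $\D\sqrt{\rho_N}\rightharpoonup\D\sqrt\rho$ weakly in $L^2$ (from $\sqrt{\rho_N}\rightharpoonup\sqrt\rho$ weakly in $L^2(0,T;H^2(\O))$) against the strong $L^2$ convergence of $\na\sqrt{\rho_N}\,\psi$ and $\sqrt{\rho_N}\,\Dv\psi$. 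Finally, for $\delta\rho_N\na\D^9\rho_N\,\varphi$ I would integrate by parts, moving all but five Laplacians onto the smooth test function, and rewrite the term (up to a sign) as $\delta\int_0^T\int_\O\D^5\rho_N\,\D^4\Dv(\rho_N\varphi)\,dx\,dt$; here $\D^5\rho_N\rightharpoonup\D^5\rho$ weakly in $L^2$ by the $L^2(0,T;H^{10})$ bound of \eqref{estimate for density N}, while $\D^4\Dv(\rho_N\varphi)\to\D^4\Dv(\rho\varphi)$ strongly in $L^2$ by the strong $L^2(0,T;H^9)$ convergence of $\rho_N$, so the product passes to the limit.

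The energy inequality \eqref{enery inequality with varepsilon} then follows by weak lower semicontinuity: every dissipation term and every summand of $E(\rho_N,\u_N)$ is a convex functional of a weakly convergent quantity --- $\sqrt{\rho_N}\u_N$, $\D\u_N$, $\sqrt{\rho_N}\mathbb{D}\u_N$, $\D^5\rho_N$, $\na\rho_N^{\frac{\gamma}{2}}$, $\na\rho_N^{-5}$, $\u_N$, $\rho_N^{\frac{1}{4}}\u_N$, $\na\D^4\rho_N$, $\sqrt{\rho_N}\na^2\log\rho_N$ (the last via the identity \eqref{equality for density} and the strong convergence of its lower-order pieces) --- while $E_0(\rho_N,\u_N)\to E_0$. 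The positivity statement \eqref{positive density} has already been established. This produces the claimed weak solution $(\rho,\u)$.

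The step I expect to be the main obstacle is the consistent treatment of the two highest-order terms, the capillary/quantum term and the artificial hyperviscosity $\delta\rho_N\na\D^9\rho_N$: the crude a priori bounds put $\na\D^9\rho_N$ far outside $L^2$, and only after the correct integration by parts against the smooth Galerkin test function --- which reduces the derivatives falling on $\rho_N$ to exactly $\D^5$ --- does the ``weak $\times$ strong'' pairing close. A secondary technical point is the weak-in-time continuity of $\rho_N\u_N$ needed to make sense of the endpoint term at $t=T$ and of the initial momentum.
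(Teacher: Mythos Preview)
Your proposal is correct and follows essentially the same approach as the paper: establish the uniform Galerkin estimates of Proposition~\ref{Lemma of existence on first level} and Lemma~\ref{Lemma for uniofrm estimates N}, use Aubin--Lions to obtain the strong/weak convergences \eqref{strong convergence of density-1}--\eqref{strong convergence of mass-1}, pass to the limit term by term in \eqref{approximation momentum balance}, and invoke weak lower semicontinuity for \eqref{enery inequality with varepsilon}. The paper is in fact terser than you are --- after Lemma~\ref{Lemma of convergence on drag term} and the strong convergence of $\rho_N^\gamma$, $\rho_N^{-10}$ it simply writes ``By the above compactness, we are ready to pass into the limits as $N\to\infty$'' without spelling out the capillary or $\delta$-hyperviscosity terms, whereas your integration-by-parts reduction of $\delta\rho_N\nabla\Delta^9\rho_N$ to a weak-$\Delta^5\rho_N$ / strong-$\Delta^4\Dv(\rho_N\varphi)$ pairing is exactly the right way to make that step precise.
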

\section{Bresch-Desjardins Entropy and vanishing limits}
The goal of this section is to deduce the Bresch-Desjardins Entropy for the approximation system in Proposition \ref{Proposition on the weak solutions after N}, and to rely on it to pass into the limits as $\varepsilon,\,\mu,\,\eta,\,\delta$ go to zero.
By \eqref{L2} and \eqref{positive density}, we have \begin{equation}
\label{density bounded and bounded below}
\rho(t,x)\geq C(\delta,\eta)>0\quad\text{ and } \rho\in L^2(0,T;H^{10}(\O))\cap L^{\infty}(0,T;H^9(\O)).
\end{equation}
\subsection{BD entropy} Thanks to \eqref{density bounded and bounded below}, we can use $\varphi=\nabla(\log\rho)$ to test the momentum equation to derive the Bresch-Desjardins entropy.
 Thus, we have
 \begin{Lemma}
 \label{Lemma BD-1}
 \begin{equation*}
 \begin{split}
 &\frac{d}{dt}\int_{\O}\left(\frac{1}{2}\rho|\u+\frac{\nabla\rho}{\rho}|^2+\frac{\delta}{2}|\nabla^9\rho|^2+\frac{\kappa}{2}|\nabla\sqrt{\rho}|^2+\frac{\rho^{\gamma}}{\gamma-1}+\frac{\rho^{-10}}{10}\right)\,dx+\eta\int_{\O}|\nabla\rho^{-5}|^2\,dx
 \\&+\int_{\O}|\nabla\rho^{\frac{\gamma}{2}}|\,dx+\delta\varepsilon\int_{\O}|\D^5\rho|^2\,dx+2\delta\int_{\O}|\D^5\rho|^2\,dx+\frac{1}{2}\int_{\O}\rho|\nabla\u-\nabla^{T}\u|^2\,dx
 \\&+\mu\int_{\O}|\D\u|^2\,dx+\kappa\int_{\O}\rho|\nabla^2\log\rho|^2\,dx+\varepsilon\int_{\O}\frac{|\D\rho|^2}{\rho}\,dx
 \\&=\varepsilon\int_{\O}\nabla\rho\cdot\nabla\u\cdot\nabla\log\rho\,dx+\varepsilon\int_{\O}\D\rho\frac{|\nabla\log\rho|^2}{2}\,dx-\varepsilon\int_{\O}\Dv(\rho\u)\frac{1}{\rho}\D\rho\,dx
 \\&-\mu\int_{\O}\D\u\cdot\nabla\D\log\rho\,dx-r_1\int_{\O}|\u|^2\u\nabla\rho\,dx-r_0\int_{\O}\frac{\u\cdot\nabla\rho}{\rho}\,dx\\&
 =R_1+R_2+R_3+R_4+R_5+R_6.
 \end{split}
 \end{equation*}
 \end{Lemma}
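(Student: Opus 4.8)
The plan is to obtain the identity by testing the momentum equation of Proposition~\ref{Proposition on the weak solutions after N} with $\varphi=\na\log\r=\na\r/\r$ and adding the outcome to the basic energy balance (the one produced by testing with $\u$). This is admissible: by \eqref{density bounded and bounded below} the density $\r$ is smooth and stays uniformly bounded above and away from zero on $[0,T]\times\O$, while $\u\in L^2(0,T;H^2(\O))$ (from $\mu\int|\D\u|^2<\infty$, $\sqrt\r\,\na\u\in L^2$ and the positivity of $\r$), so $\na\log\r\in L^2(0,T;H^k(\O))$ for every $k$ and every integration by parts below is legitimate; in particular the term $\na\D\log\r$ generated by the artificial hyper-diffusion $\mu\D^2\u$ is meaningful only because of the $\delta$-regularization $\r\in L^\infty(0,T;H^9(\O))$.

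First I would write $\tfrac{1}{2}\r|\u+\na\log\r|^2=\tfrac{1}{2}\r|\u|^2+\r\u\cd\na\log\r+\tfrac{1}{2}\r|\na\log\r|^2$ and use $\tfrac{1}{2}\r|\na\log\r|^2=2|\na\sqrt\r|^2$; the term $\tfrac{d}{dt}\int\tfrac{1}{2}\r|\u|^2$ is supplied by the energy balance. To treat $\tfrac{d}{dt}\int(\r\u\cd\na\log\r+2|\na\sqrt\r|^2)\,dx$ I would move $\del_t$ onto $\r\u$ and substitute $\del_t\log\r=(\varepsilon\D\r-\Dv(\r\u))/\r$ coming from the regularized continuity equation $\r_t=\varepsilon\D\r-\Dv(\r\u)$: the $\varepsilon$-part of this substitution produces precisely the remainders $R_2=\varepsilon\int\D\r\,\tfrac{1}{2}|\na\log\r|^2\,dx$ and $R_3=-\varepsilon\int\Dv(\r\u)\tfrac{1}{\r}\D\r\,dx$ together with the good term $\varepsilon\int|\D\r|^2/\r$, whereas the $-\Dv(\r\u)/\r$-part is absorbed together with the convective term $\Dv(\r\u\otimes\u)$ tested against $\na\log\r$.

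The algebraic heart is the Bresch--Desjardins cancellation (following \cite{BD,BDL}): tested with $\na\log\r$, the viscous term $\Dv(\r\mathbb{D}\u)$ contributes $-\int\r\,\mathbb{D}\u:\na^2\log\r\,dx$, and since $\na^2\log\r$ is symmetric and $\mu(\r)=\r$, combining this with the $\int\r|\mathbb{D}\u|^2$ of the energy balance and with the evolution of $\int 2|\na\sqrt\r|^2$ leaves exactly $\tfrac{1}{2}\int\r|\na\u-\na^T\u|^2\,dx$ and $\mu\int|\D\u|^2\,dx$ on the dissipative side. The remaining structural terms drop out one by one: $\na\r^\gamma$ against $\na\log\r$ gives the pressure dissipation $\int|\na\r^{\gamma/2}|^2\,dx$; the cold pressure $-\eta\na\r^{-10}$ gives $\eta\int|\na\r^{-5}|^2\,dx$; the quantum term, rewritten through the identity $2\r\na(\tfrac{\D\sqrt\r}{\sqrt\r})=\Dv(\r\na^2\log\r)$ already used in Section~2, gives $\kappa\int\r|\na^2\log\r|^2\,dx$; the high-order regularization $\delta\r\na\D^9\r$ produces the $\D^5\r$ terms (and, through the $\varepsilon\D\r$ in the continuity equation, the $\delta\varepsilon$ one). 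The pieces carrying no sign are collected as $R_1$ (from $\varepsilon\na\r\cd\na\u$ tested with $\na\log\r$), $R_2,R_3$ (from the $\varepsilon$-substitution above), $R_4=-\mu\int\D\u\cd\na\D\log\r\,dx$ (from $\mu\D^2\u$ tested with $\na\log\r$), and $R_5=-r_1\int|\u|^2\u\cd\na\r\,dx$, $R_6=-r_0\int\tfrac{\u\cd\na\r}{\r}\,dx$ (from the damping terms).

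The main obstacle is not a single estimate but the bookkeeping: one must carry out the two testings simultaneously, track signs and constants through every integration by parts, and confirm that each resulting term either reaches the good side with the correct coefficient or is one of $R_1,\dots,R_6$. The two delicate points are the precise sign in the $\mathbb{D}\u$--$\na^2\log\r$ cancellation — where the symmetry of $\na^2\log\r$ and the choice $\mu(\r)=\r$ are essential — and checking that the $\varepsilon$- and $\delta$-regularizations generate nothing beyond the listed remainders, for which the smoothness and strict positivity from \eqref{density bounded and bounded below} are used repeatedly. No new a priori bound is needed at this stage: the statement is an exact identity, and the estimation of $R_1,\dots,R_6$ — together with the verification that they vanish as $\varepsilon,\mu,\eta,\delta\to0$ — belongs to the subsequent part of Section~3.
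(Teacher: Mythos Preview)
Your proposal is correct and follows exactly the approach the paper indicates: the paper's own proof of Lemma~\ref{Lemma BD-1} amounts to the single sentence ``we can follow the same way as in \cite{Z} to deduce the above equality,'' i.e.\ test the momentum equation with $\varphi=\nabla\log\rho$ (admissible thanks to \eqref{density bounded and bounded below}) and add the resulting identity to the energy balance, which is precisely what you outline. Your write-up in fact supplies more detail than the paper does on the bookkeeping of the BD cancellation, the origin of each $R_i$, and the role of the $\varepsilon$- and $\delta$-regularizations; the subsequent estimation of the $R_i$ indeed begins only after the lemma is stated.
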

 We can follow the same way as in \cite{Z} to deduce the above equality, and control terms $R_i$ for $i=1,2,3,4,$ and they approach to zero as $\varepsilon\to0$ or $\mu\to 0$. We estimate $R_5$ as follows
 \begin{equation*}
 |R_5|\leq C\int_{\O}\rho|\u|^2|\nabla\u|\,dx\leq C\int_{\O}\rho|\u|^4\,dx+\frac{1}{8}\int_{\O}\rho|\nabla\u|^2\,dx,
 \end{equation*}
 and for $R_6$ we have
 \begin{equation*}
 R_6=r_0\int_{\O}\frac{\rho_t+\rho\Dv\u-\varepsilon\D\rho}{\rho}\,dx=r_0\int_{\O}(\log\rho)_t\,dx-\varepsilon r_0\int_{\O}\frac{\D\rho}{\rho}\,dx.
 \end{equation*}
 since $\rho$ is uniformly bounded in $L^{\infty}(0,T;L^{\gamma}(\O))$, we have
 $$r_0\int_{\O}\log_{+}\rho\,dx\leq C,\;\;\text{ where } \log_{+}g=\log\max(g,1).$$
 Thus, we need to assume that $-r_0\int_{\O}\log_{-}\rho_0\,dx$ is uniformly bounded in $L^1(\O).$
 Also we can control
 $$\left|\varepsilon r_0\int_{\O}\frac{\D\rho}{\rho}\,dx\right|\leq \varepsilon\|\rho\|_{H^2(\O)}\|\rho^{-1}\|_{L^{\infty}(\O)},$$
 and it goes to zero as $\varepsilon\to 0.$

 Thus, we have the following inequality

 \begin{equation}
 \begin{split}
 \label{BD first level}
 &\int_{\O}\left(\frac{1}{2}\rho|\u+\frac{\nabla\rho}{\rho}|^2+\frac{\delta}{2}|\nabla^9\rho|^2+\frac{\kappa}{2}|\nabla\sqrt{\rho}|^2+\frac{\rho^{\gamma}}{\gamma-1}+\frac{\rho^{-10}}{10}-r_0\log\rho\right)\,dx+\eta\int_0^T\int_{\O}|\nabla\rho^{-5}|^2\,dx\,dt
 \\&+\int_0^T\int_{\O}|\nabla\rho^{\frac{\gamma}{2}}|\,dx\,dt+\delta\varepsilon\int_0^T\int_{\O}|\D^5\rho|^2\,dx\,dt+2\delta\int_0^T\int_{\O}|\D^5\rho|^2\,dx\,dt
 \\&+\frac{1}{2}\int_0^T\int_{\O}\rho|\nabla\u-\nabla^{T}\u|^2\,dx\,dt
 +\mu\int_0^T\int_{\O}|\D\u|^2\,dx+\kappa\int_{\O}\rho|\nabla^2\log\rho|^2\,dx\,dt\\
 &\leq \sum_{i=1}^4 R_i+\varepsilon\|\rho\|_{H^2(\O)}\|\rho^{-1}\|_{L^{\infty}(\O)}+C\int_0^T\int_{\O}\rho|\u|^4\,dx\,dt+\frac{1}{8}\int_0^T\int_{\O}\rho|\nabla\u|^2\,dx\,dt+\\&
 \int_{\O}\left(\frac{1}{2}\rho_0|\u_0+\frac{\nabla\rho_0}{\rho_0}|^2+\frac{\delta}{2}|\nabla^9\rho_0|^2+\frac{\kappa}{2}|\nabla\sqrt{\rho_0}|^2+\frac{\rho_0^{\gamma}}{\gamma-1}+\frac{\rho_0^{-10}}{10}-r_0\log_{-}\rho_0\right)\,dx
 ,
 \end{split}
 \end{equation}
where
  $\int_0^T\int_{\O}\rho|\u|^4\,dx\,dt$ is bounded by the initial energy, and $\frac{1}{8}\int_0^T\int_{\O}\rho|\nabla\u|^2\,dx\,dt$
 can be controlled by $$\int_0^T\int_{\O}\rho|\nabla\u-\nabla^T\u|^2\,dx\,dt,\quad\quad\text{ and }\;\;\int_0^T\int_{\O}\rho|\mathbb{D}\u|^2\,dx\,dt.$$
In-deed, it can be controlled by
$$\int_0^T\int_{\O}\rho|\nabla\u|^2\,dx\,dt\leq \int_{\O}\left(\rho_0|\u_0|^2+\frac{\rho^{\gamma}_0}{\gamma-1}+|\nabla\sqrt{\rho_0}|^2-r_0\log_{-}\rho_0\right)\,dx+2 E_0.$$
 Thus, \eqref{BD first level} gives us
  \begin{equation}
  \label{BD controled by initial value}
 \begin{split}
 &\int_{\O}\left(\frac{1}{2}\rho|\u+\frac{\nabla\rho}{\rho}|^2+\frac{\delta}{2}|\nabla^9\rho|^2+\frac{\kappa}{2}|\nabla\sqrt{\rho}|^2+\frac{\rho^{\gamma}}{\gamma-1}+\frac{\rho^{-10}}{10}-r_0\log\rho\right)\,dx+\eta\int_0^T\int_{\O}|\nabla\rho^{-5}|^2\,dx\,dt
 \\&+\int_0^T\int_{\O}|\nabla\rho^{\frac{\gamma}{2}}|\,dx\,dt+\delta\varepsilon\int_0^T\int_{\O}|\D^5\rho|^2\,dx\,dt+2\delta\int_0^T\int_{\O}|\D^5\rho|^2\,dx\,dt
 \\&+\frac{1}{2}\int_0^T\int_{\O}\rho|\nabla\u-\nabla^{T}\u|^2\,dx\,dt
 +\mu\int_0^T\int_{\O}|\D\u|^2\,dx+\kappa\int_{\O}\rho|\nabla^2\log\rho|^2\,dx\,dt\\
 &\leq
 2\int_{\O}\left(\frac{1}{2}\rho_0|\u_0+\frac{\nabla\rho_0}{\rho_0}|^2+\frac{\delta}{2}|\nabla^9\rho_0|^2+\frac{\kappa}{2}|\nabla\sqrt{\rho_0}|^2+\frac{\rho_0^{\gamma}}{\gamma-1}+\frac{\rho_0^{-10}}{10}-r_0\log_{-}\rho_0\right)\,dx
 \\&+ \sum_{i=1}^4 R_i+\varepsilon\|\rho\|_{H^2(\O)}\|\rho^{-1}\|_{L^{\infty}(\O)}+2E_0.
 \end{split}
 \end{equation}
Thus, we infer the following estimate from the Bresch-Desjardins entropy
 $$\kappa\int_0^T\int_{\O}\rho|\nabla^2\log\rho|^2\,dx\,dt\leq C,$$
where $C$ is independent on $\varepsilon, \;\eta,\,\mu,\,\delta.$ \\
Applying Lemma \ref{Lemma of J inequality}, we  have the following uniform estimate:
\begin{equation*}
\kappa^{\frac{1}{2}}\|\sqrt{\rho}\|_{L^2(0,T;H^2(\O))}+\kappa^{\frac{1}{4}}\|\nabla\rho^{\frac{1}{4}}\|_{L^4(0,T;L^{4}(\O))}\leq C,
\end{equation*}
where the constant $C>0$ is independent on $\varepsilon, \;\eta,\,\mu,\,\delta.$
\subsection{Passing to the limits as $\varepsilon, \mu\to0$}
We use $(\rho_{\varepsilon,\mu},\u_{\varepsilon,\mu})$ to denote the solutions at this level of approximation. It is easy to find that $(\rho_{\varepsilon,\mu},\u_{\varepsilon,\mu})$ has the following uniform estimates
\begin{equation}
\label{L1-varepsilon}
\sqrt{\rho_{\varepsilon,\mu}}\u_{\varepsilon,\mu}\in L^{\infty}(0,T;L^2(\O)),\sqrt{\rho_{\varepsilon,\mu}}\mathbb{D}\u_{\varepsilon,\mu}\in L^2((0,T)\times\O),\sqrt{\mu}\D\u_{\varepsilon,\mu}\in L^2((0,T)\times\O),
\end{equation}
\begin{equation}
\label{L2-varepsilon}
\sqrt{\varepsilon}\D^5\rho_{\varepsilon,\mu}\in L^2((0,T)\times\O),\sqrt{\delta}\rho_{\varepsilon,\mu}\in L^{\infty}(0,T;H^9(\O)),\sqrt{\kappa}\sqrt{\rho_{\varepsilon,\mu}}\in L^{\infty}(0,T;H^1(\O)),
\end{equation}
\begin{equation}
\label{L3-varepsilon}
\rho_{\varepsilon,\mu}^{-1}\in L^{\infty}(0,T;L^{10}(\O)),\sqrt{\varepsilon\eta}\nabla\rho_{\varepsilon,\mu}^{-5}\in L^2((0,T)\times\O),
\end{equation}
\begin{equation}
\label{L4-varepsilon}
\u_{\varepsilon,\mu}\in L^{2}((0,T)\times\O),\rho_{\varepsilon,\mu}^{\frac{1}{4}}\u_{\varepsilon,\mu}\in L^4((0,T)\times\O).
\end{equation}
By the Bresch-Desjardins entropy, we also have the following additional estimates
 \begin{equation}
 \label{estimate from BD}
 \nabla\sqrt{\rho_{\varepsilon,\mu}}\in L^{\infty}(0,T;L^2(\O)),\quad\sqrt{\delta}\D^5\rho_{\varepsilon,\mu}\in L^2(0,T;L^2(\O)),
 \end{equation}
 and
 \begin{equation}
 \label{presure from BD}
 \nabla\rho_{\varepsilon,\mu}^{\frac{\gamma}{2}}\in L^2((0,T)\times\O),\;\;\;\;\sqrt{\eta}\nabla\rho_{\varepsilon,\mu}^{-5}\in L^2((0,T)\times\O)).
 \end{equation}
 Also we have the following uniform estimate
 \begin{equation}
\label{estimate on density J inequality}
\kappa^{\frac{1}{2}}\|\sqrt{\rho_{\varepsilon,\mu}}\|_{L^2(0,T;H^2(\O))}+\kappa^{\frac{1}{4}}\|\nabla\rho_{\varepsilon,\mu}^{\frac{1}{4}}\|_{L^4(0,T;L^{4}(\O))}\leq C,
\end{equation}
where the constant $C>0$ is independent of $\varepsilon, \;\eta,\,\mu,\,\delta.$

By Lemma \ref{Lemma BD-1}, one deduces  $$\int_0^T\int_{\O}\rho_{\varepsilon,\mu}|\nabla\u_{\varepsilon,\mu}-\nabla^T\u_{\varepsilon,\mu}|^2\,dx\,dt\leq C,$$
which together with \eqref{L1-varepsilon}, yields
\begin{equation}
\label{L5-varepsilon}
\int_0^T\int_{\O}\rho_{\varepsilon,\mu}|\nabla\u_{\varepsilon,\mu}|^2\,dx\,dt\leq C,
\end{equation}
where the constant $C>0$ is independent of $\varepsilon, \;\eta,\,\mu,\,\delta.$
Based on above estimates, we have the following estimates uniform in $\varepsilon$:
\begin{Lemma}
\label{Lemma for uniofrm estimates}
The following estimates holds:
\begin{equation}
\label{estimate for density}
\begin{split}
&\|(\sqrt{\rho_{\varepsilon,\mu}})_t\|_{L^2(0,T;L^{2}(\O))}+\|\sqrt{\rho_{\varepsilon,\mu}}\|_{L^{2}(0,T;H^2(\O))}\leq K,\\&
\|(\rho_{\varepsilon,\mu})_t\|_{L^2(0,T;L^{\frac{3}{2}}(\O))}+\|\rho_{\varepsilon,\mu}\|_{L^{\infty}(0,T;H^{9}(\O))}+\|\rho_{\varepsilon,\mu}\|_{L^2(0,T;H^{10}(\O))}\leq K,
\end{split}
\end{equation}
\begin{equation}
\label{estimate for mass}
\|(\rho_{\varepsilon,\mu}\u_{\varepsilon,\mu})_t\|_{L^2(0,T;H^{-9}(\O))}+\|\rho_{\varepsilon,\mu}\u_{\varepsilon,\mu}\|_{L^{2}((0,T)\times\O)}\leq K,
\end{equation}
\begin{equation}
\label{nabla of mass 1}
\nabla(\rho_{\varepsilon,\mu}\u_{\varepsilon,\mu})\;\;\text{ is uniformly  bounded in } L^{4}(0,T;L^{\frac{6}{5}}(\O))+L^2(0,T;L^{\frac{3}{2}}(\O)).
\end{equation}
\begin{equation}
\label{estimate presure}
\|\rho_{\varepsilon,\mu}^{\gamma}\|_{L^{\frac{5}{3}}((0,T)\times\O)}\leq K,
\end{equation}
\begin{equation}
\label{estimate cold presure}
\|\rho_{\varepsilon,\mu}^{-10}\|_{L^{\frac{5}{3}}((0,T)\times\O)}\leq K,
\end{equation}
where $K$ is independent of $\varepsilon,\,\mu$.
\end{Lemma}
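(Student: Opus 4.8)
The plan is to establish each of the six estimates in Lemma \ref{Lemma for uniofrm estimates} by reading off the consequences of the energy inequality \eqref{enery inequality with varepsilon}, the Bresch--Desjardins entropy \eqref{BD controled by initial value}, and the uniform bound \eqref{estimate on density J inequality}, exactly as was done one level earlier in Lemma \ref{Lemma for uniofrm estimates N}, but now tracking that all constants are independent of $\varepsilon,\mu,\eta,\delta$. First I would record the pointwise algebraic identities for the time derivatives: from the continuity equation $(\rho_{\varepsilon,\mu})_t=\varepsilon\D\rho_{\varepsilon,\mu}-\Dv(\rho_{\varepsilon,\mu}\u_{\varepsilon,\mu})$ and $2(\sqrt{\rho_{\varepsilon,\mu}})_t=-\sqrt{\rho_{\varepsilon,\mu}}\Dv\u_{\varepsilon,\mu}-8\nabla\rho_{\varepsilon,\mu}^{1/4}\,\rho_{\varepsilon,\mu}^{1/4}\u_{\varepsilon,\mu}$, the latter term being bounded in $L^2$ by H\"older using $\nabla\rho_{\varepsilon,\mu}^{1/4}\in L^4$ (from \eqref{estimate on density J inequality}), $\rho_{\varepsilon,\mu}^{1/4}\u_{\varepsilon,\mu}\in L^4$ (from \eqref{L4-varepsilon}), and $\sqrt{\rho_{\varepsilon,\mu}}\nabla\u_{\varepsilon,\mu}\in L^2$ combined with $\rho_{\varepsilon,\mu}^{-1}\in L^\infty_t L^{10}_x$. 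The parabolic term $\varepsilon\D\rho_{\varepsilon,\mu}$ is controlled since $\sqrt{\varepsilon}\D^5\rho_{\varepsilon,\mu}\in L^2$ gives $\varepsilon\|\D\rho_{\varepsilon,\mu}\|$ small; here one must be slightly careful and land $(\rho_{\varepsilon,\mu})_t$ in $L^2(0,T;L^{3/2}(\O))$ rather than $L^2((0,T)\times\O)$, which is why the statement is weaker than its $N$-level analogue.

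Next I would treat the momentum quantities. For $\|\rho_{\varepsilon,\mu}\u_{\varepsilon,\mu}\|_{L^2}$ I would write $\rho_{\varepsilon,\mu}\u_{\varepsilon,\mu}=\rho_{\varepsilon,\mu}^{3/4}\cdot\rho_{\varepsilon,\mu}^{1/4}\u_{\varepsilon,\mu}$ and use $\rho_{\varepsilon,\mu}^{3/4}\in L^\infty_t L^4_x$ (Sobolev from $\sqrt{\rho_{\varepsilon,\mu}}\in L^\infty_t H^1_x$, since $H^1\hookrightarrow L^6$ in 3D so $\sqrt{\rho}\in L^6$, hence $\rho^{3/4}\in L^4$) together with \eqref{estimate on density J inequality}. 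For $(\rho_{\varepsilon,\mu}\u_{\varepsilon,\mu})_t$ I would expand using the momentum equation of Proposition \ref{Proposition on the weak solutions after N} and bound each term in $L^2(0,T;H^{-9}(\O))$: the convection term $\Dv(\rho\u\otimes\u)$ via $\rho|\u|^2=|\rho^{1/4}\u|^2\cdot\rho^{1/2}$, the pressure $\nabla\rho^\gamma$ and cold pressure $\eta\nabla\rho^{-10}$ via \eqref{estimate presure}--\eqref{estimate cold presure}, the terms $\mu\D^2\u$, $\Dv(\rho\mathbb D\u)$, $\varepsilon\nabla\rho\cdot\nabla\u$, $\delta\rho\nabla\D^9\rho$ and $\kappa\rho\nabla(\D\sqrt\rho/\sqrt\rho)$ using the high-regularity bounds on $\rho$ in $L^\infty_tH^9_x\cap L^2_tH^{10}_x$ and the $H^2$-bound on $\sqrt\rho$, and the drag terms $r_0\u$, $r_1\rho|\u|^2\u$ from \eqref{L4-varepsilon} and the $L^1$-bound on $\rho|\u|^4$; the negative Sobolev exponent $-9$ is precisely what absorbs these rough terms. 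Inequality \eqref{nabla of mass 1} follows from $\nabla(\rho_{\varepsilon,\mu}\u_{\varepsilon,\mu})=\nabla\sqrt{\rho_{\varepsilon,\mu}}\cdot\rho_{\varepsilon,\mu}^{1/4}\u_{\varepsilon,\mu}\cdot\rho_{\varepsilon,\mu}^{1/4}+\sqrt{\rho_{\varepsilon,\mu}}\cdot\sqrt{\rho_{\varepsilon,\mu}}\nabla\u_{\varepsilon,\mu}$, the first product landing in $L^4(0,T;L^{6/5}(\O))$ by H\"older with $\nabla\sqrt\rho\in L^\infty_tL^2_x$, $\rho^{1/4}\u\in L^4_{t,x}$, $\rho^{1/4}\in L^\infty_tL^{12}_x$, and the second in $L^2(0,T;L^{3/2}(\O))$ with $\sqrt\rho\in L^\infty_tL^6_x$ and $\sqrt\rho\nabla\u\in L^2_{t,x}$.

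Finally, for \eqref{estimate presure} I would use $\nabla\rho_{\varepsilon,\mu}^{\gamma/2}\in L^2((0,T)\times\O)$ (from \eqref{presure from BD}, uniform since the BD bound \eqref{BD controled by initial value} is independent of the parameters) to get $\rho_{\varepsilon,\mu}^{\gamma/2}\in L^2(0,T;H^1(\O))\hookrightarrow L^2(0,T;L^6(\O))$, hence $\rho_{\varepsilon,\mu}^\gamma\in L^1(0,T;L^3(\O))$; interpolating with $\rho_{\varepsilon,\mu}^\gamma\in L^\infty(0,T;L^1(\O))$ (from the energy bound on $\rho^\gamma/(\gamma-1)$) via H\"older with exponents $2/5$ and $3/5$ yields $\rho_{\varepsilon,\mu}^\gamma\in L^{5/3}((0,T)\times\O)$, and the same argument applied to $\rho_{\varepsilon,\mu}^{-5}$ using $\nabla\rho_{\varepsilon,\mu}^{-5}\in L^2$ and $\rho_{\varepsilon,\mu}^{-10}\in L^\infty_tL^1_x$ gives \eqref{estimate cold presure}. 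The main obstacle, and the only place where care beyond bookkeeping is needed, is verifying that every constant genuinely does not depend on $\varepsilon,\mu,\eta,\delta$: this hinges on the fact that the right-hand side of the BD entropy \eqref{BD controled by initial value} is controlled purely by the initial energy and the initial BD quantity (with the $\varepsilon\|\rho\|_{H^2}\|\rho^{-1}\|_{L^\infty}$ and $\sum R_i$ contributions vanishing as $\varepsilon,\mu\to0$ and in any case bounded), so that the bounds coming through Lemma \ref{Lemma of J inequality} and \eqref{estimate on density J inequality} stay uniform; one must also ensure the $\delta$- and $\eta$-dependent terms $\delta\rho\nabla\D^9\rho$ and $\eta\nabla\rho^{-10}$ are placed in the weakest norm ($H^{-9}$) so their coefficients can be absorbed without generating parameter-dependent constants in the final estimate, and that the degradation of $(\rho_{\varepsilon,\mu})_t$ to $L^2(0,T;L^{3/2})$ is accepted as the price of uniformity.
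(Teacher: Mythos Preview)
Your proposal is correct and follows precisely the paper's own approach: the paper's proof simply reads ``By \eqref{L1-varepsilon}--\eqref{L5-varepsilon}, following the same way as in the proof of Lemma \ref{Lemma for uniofrm estimates N}, we can prove the above estimates,'' and you have spelled out exactly those details.

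One small correction: you repeatedly assert that the constants are independent of $\varepsilon,\mu,\eta,\delta$, but the statement only claims independence of $\varepsilon,\mu$. In fact several of the bounds you invoke genuinely depend on $\eta$ or $\delta$ --- for instance $\|\rho_{\varepsilon,\mu}\|_{L^\infty(0,T;H^9(\O))}$ comes from the $\frac{\delta}{2}|\nabla\D^4\rho|^2$ term in the energy and thus carries a $\delta^{-1/2}$, and the cold-pressure bound \eqref{estimate cold presure} uses $\sqrt{\eta}\nabla\rho_{\varepsilon,\mu}^{-5}\in L^2$ from \eqref{presure from BD}, yielding a constant proportional to $\eta^{-1/2}$. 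Your arguments are otherwise fine; just drop the overclaim and state, as the lemma does, that $K=K(\eta,\delta)$ is uniform in $\varepsilon,\mu$.
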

\emph{Proof.} By \eqref{L1-varepsilon}-\eqref{L5-varepsilon}, following the same way as in the proof of Lemma \ref{Lemma for uniofrm estimates N}, we can prove the above estimates.
\\

Applying Aubin-Lions Lemma and Lemma \ref{Lemma for uniofrm estimates},
we conclude
\begin{equation}
\label{strong convergence of density}
\rho_{\varepsilon,\mu}\to\rho\;\;\text{ strongly in } C(0,T;H^9(\O)),\;\text{ weakly in } L^2(0,T;H^{10}(\O)),
\end{equation}
$$\sqrt{\rho_{\varepsilon,\mu}}\to\sqrt{\rho}\;\;\text{ strongly in } L^2(0,T;H^1(\O)),\;\text{ weakly in } L^2(0,T;H^{2}(\O))$$
and \begin{equation}
\label{strong convergence of mass}
\rho_{\varepsilon,\mu}\u_{\varepsilon,\mu}\to\rho\u\;\;\text{ strongly in } L^2((0,T)\times\O).
\end{equation}
We notice that $\u_{\varepsilon,\mu}\in L^2((0,T)\times\O),$ thus, $$\u_{\varepsilon,\mu}\to \u\;\;\text{ weakly in } L^2((0,T)\times\O).$$
Thus, we can pass into the limits for term $\rho_{\varepsilon,\mu}\u_{\varepsilon,\mu}\otimes\u_{\varepsilon,\mu}$ as follows
$$\rho_{\varepsilon,\mu}\u_{\varepsilon,\mu}\otimes\u_{\varepsilon,\mu}\to\rho\u\otimes\u$$
in the distribution sense.

We can show $$\rho_{\varepsilon,\mu}|\u_{\varepsilon,\mu}|^2\u_{\varepsilon,mu}\to\rho|\u|^2\u\quad\text{ strongly in } L^1(0,T;L^1(\O))$$
as the
same to Lemma \ref{Lemma of convergence on drag term}.

Here we state the following lemma on the strong convergence of $\sqrt{\rho_{n}}\u_{n}$, which will be used later again. The proof is essential same to \cite{MV}.

\begin{Lemma}
\label{Lemma of strong convergence}
If$\rho_n^{\frac{1}{4}}\u_n$ is bounded in $ L^4(0,T;L^4(\O))$, $\rho_n$ almost everywhere converges to $\rho$, $\rho_n\u_n$ almost everywhere converges to $\rho\u,$
then
$$\sqrt{\rho_n}\u_n\to\sqrt{\rho}\u\quad\text{ strongly in } L^2(0,T;L^2(\O)).$$
\end{Lemma}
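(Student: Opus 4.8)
The plan is to reproduce the Mellet--Vasseur compactness argument \cite{MV}. Set $g_n:=\sqrt{\rho_n}\u_n$. Since $\rho_n|\u_n|^4=|\rho_n^{1/4}\u_n|^4$ is bounded in $L^1((0,T)\times\O)$ and $\rho_n$ is bounded in $L^{\infty}(0,T;L^{\gamma}(\O))$ (hence in $L^{\infty}(0,T;L^1(\O))$ on the finite-measure domain), the splitting $\rho_n|\u_n|^2=(\rho_n|\u_n|^4)^{1/2}\rho_n^{1/2}$ together with H\"older's inequality shows that $g_n$ is bounded in $L^2((0,T)\times\O)$. On $\{\rho>0\}$ one has $\rho_n>0$ for $n$ large, so $\u_n=\frac{\rho_n\u_n}{\rho_n}\to\u$ a.e.\ there, whence $g_n\to\sqrt{\rho}\u$ a.e.\ on $\{\rho>0\}$. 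On $\{\rho=0\}$, Fatou's lemma applied to $\rho_n|\u_n|^2=\frac{|\rho_n\u_n|^2}{\rho_n}$ shows that the a.e.\ limit of $\rho_n\u_n$ must vanish a.e.\ on $\{\rho=0\}$ (otherwise the integral of $\liminf_n\rho_n|\u_n|^2$ would be infinite); with the usual convention $\sqrt{\rho}\u:=0$ on $\{\rho=0\}$, one gets $\sqrt{\rho}\u\in L^2((0,T)\times\O)$.

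Next I would truncate in the velocity: for $M>0$ write $g_n=\sqrt{\rho_n}\u_n\chi_{|\u_n|\le M}+\sqrt{\rho_n}\u_n\chi_{|\u_n|>M}$. The tail obeys $\|\sqrt{\rho_n}\u_n\chi_{|\u_n|>M}\|_{L^2((0,T)\times\O)}^2\le M^{-2}\|\rho_n|\u_n|^4\|_{L^1((0,T)\times\O)}\le CM^{-2}$ uniformly in $n$, and similarly $\|\sqrt{\rho}\u\,\chi_{|\u|>M}\|_{L^2((0,T)\times\O)}\to0$ as $M\to\infty$. For the truncated part, I would choose $M$ outside the at most countable set of levels for which $|\{|\u|=M\}|>0$, so that $\chi_{|\u_n|\le M}\to\chi_{|\u|\le M}$ a.e.\ on $\{\rho>0\}$, while on $\{\rho=0\}$ the truncated term is dominated by $M\sqrt{\rho_n}\to0$ a.e.; hence $\sqrt{\rho_n}\u_n\chi_{|\u_n|\le M}\to\sqrt{\rho}\u\,\chi_{|\u|\le M}$ a.e.\ on $(0,T)\times\O$. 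Since $|\sqrt{\rho_n}\u_n\chi_{|\u_n|\le M}|^2\le M^2\rho_n$ and $\rho_n\to\rho$ strongly in $L^1((0,T)\times\O)$ (from a.e.\ convergence plus the equi-integrability supplied by the $L^{\infty}(0,T;L^{\gamma}(\O))$ bound, $\gamma>1$), the generalized dominated convergence theorem gives $\sqrt{\rho_n}\u_n\chi_{|\u_n|\le M}\to\sqrt{\rho}\u\,\chi_{|\u|\le M}$ strongly in $L^2((0,T)\times\O)$ for each such $M$.

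Combining the three facts: given $\eta>0$, fix $M$ large enough that both tail terms are below $\eta$; then, by the triangle inequality,
\[
\limsup_{n\to\infty}\|g_n-\sqrt{\rho}\u\|_{L^2((0,T)\times\O)}\le 2\eta+\limsup_{n\to\infty}\big\|\sqrt{\rho_n}\u_n\chi_{|\u_n|\le M}-\sqrt{\rho}\u\,\chi_{|\u|\le M}\big\|_{L^2((0,T)\times\O)}=2\eta ,
\]
and letting $\eta\to0$ yields $\sqrt{\rho_n}\u_n\to\sqrt{\rho}\u$ strongly in $L^2(0,T;L^2(\O))$. The main obstacle is exactly the passage to the limit in the truncated term: one must (i) select $M$ so as to avoid the level set $\{|\u|=M\}$, on which $\chi_{|\u_n|\le M}$ may fail to converge, and (ii) deal simultaneously with the vacuum region $\{\rho=0\}$ and with the domination, which is where the strong $L^1$ convergence (equi-integrability) of $\rho_n$ is genuinely needed rather than mere a.e.\ convergence. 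Everything else is routine.
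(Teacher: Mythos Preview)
Your argument is correct and follows the same Mellet--Vasseur truncation scheme as the paper: split $\sqrt{\rho_n}\u_n$ at the level $|\u_n|\le M$, control the tail by $M^{-2}\|\rho_n|\u_n|^4\|_{L^1}$, and pass to the limit in the truncated part via a dominated-convergence type argument. The paper's proof is structurally identical but less careful: it does not select $M$ to avoid the level set $\{|\u|=M\}$, and it justifies the strong $L^2$ convergence of the truncated piece by noting that $\sqrt{\rho_n}\u_n\chi_{|\u_n|\le M}$ is uniformly bounded in $L^\infty(0,T;L^3(\O))$ (hence equi-integrable in $L^2$), rather than via the generalized DCT with $M^2\rho_n$ as dominating sequence. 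Both routes tacitly use integrability of $\rho_n$ beyond what is stated in the lemma hypotheses (you invoke the $L^\infty_tL^\gamma_x$ bound with $\gamma>1$; the paper invokes $\sqrt{\rho_n}\in L^\infty_tL^3_x$), so in each case the lemma is really being applied in context. Your version is in fact cleaner on the two technical points you flag (level sets and domination on the vacuum), and nothing else needs to be added.
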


\begin{proof}
Fatou's lemma yields
$$\int_{\O}\rho|\u|^4\,dx\leq \int_{\O}\lim\inf\rho_{n}|\u_{n}|^4\,dx\leq \lim\inf\int_{\O}\rho_{n}|\u_{n}|^4\,dx,$$
and hence $\rho|\u|^4$ is in $L^{1}(0,T;L^4(\O)).$

 For almost every $(t,x)$ such that when $\rho_{n}(t,x)\neq 0$, we have
$$\u_{n}=\frac{\rho_{n}\u_{n}}{\rho_{n}}\to \u.$$
For almost every $(t,x)$ such that $\rho_{n}(t,x)=0,$
then
$$\sqrt{\rho_{n}}\u_{n}\chi_{|\u_{n}|\leq M}\leq M\sqrt{\rho_{n}}=0=\sqrt{\rho}\u\chi_{|\u|\leq M}.$$
Hence,
$\sqrt{\rho_{n}}\u_{n}\chi_{|\u_{n}|\leq M}$ converges to $ \sqrt{\rho}\u\chi_{|\u|\leq M}$ almost everywhere for $(t,x).$
 Meanwhile, $
\sqrt{\rho_{n}}\u_{n}\chi_{|\u_{n}|\leq M}$ is uniformly bounded in $L^{\infty}(0,T;L^3(\O))$.

 The dominated convergence theorem gives us
\begin{equation}
\label{strong convergence sqrt density u-1}
\sqrt{\rho_{n}}\u_{n}\chi_{|\u_{n}|\leq M} \to \sqrt{\rho}\u\chi_{|\u|\leq M}\quad \text{strongly in } L^2(0,T;L^2(\O)).
\end{equation}

For any $M>0$, we have
\begin{equation}
\begin{split}&
\int_0^T\int_{\O}\left|
\sqrt{\rho_{n}}\u_{n}-\sqrt{\rho}\u\right|^2\,dx\,dt
\\&\leq
\int_0^T\int_{\O}\left|
\sqrt{\rho_{n}}\u_{n}\chi_{|\u_{n}|\leq M}-\sqrt{\rho}\u\chi_{|\u|\leq M}\right|^2\,dx\,dt
\\&+2
\int_0^T\int_{\O}|\sqrt{\rho_{n}}\u_{n}\chi_{|\u_{n}|\geq M}|^2\,dx\,dt+2
\int_0^T\int_{\O}
|\sqrt{\rho}\u\chi_{|\u|\geq M}|^2\,dx\,dt
\\&\leq\int_0^T\int_{\O}\left|
\sqrt{\rho_{n}}\u_{n}\chi_{|\u_{n}|\leq M}-\sqrt{\rho}\u\chi_{|\u|\leq M}\right|^2\,dx\,dt
\\&+\frac{2}{M^2}
\int_0^T\int_{\O}\rho_{n}|\u_{n}|^4\,dx\,dt+\frac{2}{M^2}\int_0^T\int_{\O}\rho|\u|^4\,dx\,dt.
\end{split}
\end{equation}
Thanks to \eqref{strong convergence sqrt density u-1}, we have
$$\lim\sup_{\varepsilon,\mu\to0}\|\sqrt{\rho_{n}}\u_{n}-\sqrt{\rho}\u\|_{L^2(0,T;L^2(\O))}\leq \frac{C}{M}$$
for fixed $C>0$ and  all $M>0.$ Letting $M\to\infty$, we have
$$\sqrt{\rho_{n}}\u_{n}\to\sqrt{\rho}\u\quad\text{ strongly in } L^2(0,T;L^2(\O)).$$

\end{proof}

Applying Lemma \ref{Lemma of strong convergence} with \eqref{strong convergence of density}, \eqref{strong convergence of mass} and
$$\int_0^T\int_{\O}\rho_{\varepsilon,\mu}|\u_{\varepsilon,\mu}|^4\,dx\,dt\leq C<\infty,$$
 we have
$$\sqrt{\rho_{\varepsilon,\mu}}\u_{\varepsilon,\mu}\to\sqrt{\rho}\u\text{ strongly in } L^2(0,T;L^2(\O)).$$

By \eqref{estimate presure} and $\rho_{\varepsilon,\mu}^{\gamma}$ converges almost everywhere to $\rho^{\gamma},$ we have $$\rho_{\varepsilon,\mu}^{\gamma}\to\rho^{\gamma}\;\;\text{strongly in } L^{1}((0,T)\times\O).$$
Thanks to \eqref{density bounded and bounded below}, we have $\rho_{\varepsilon,\mu}^{-10}$ converges almost everywhere to $\rho^{-10}$. Thus, with \eqref{estimate cold presure}, we obtain
$$\rho_{\varepsilon,\mu}^{-10}\to\rho^{-10}\;\;\text{strongly in } L^{1}((0,T)\times\O).$$

By previous estimates we can extract subsequences, such that $$\varepsilon\nabla\rho_{\varepsilon,\mu}\to 0\;\;\text{ strongly in } L^2((0,T)\times\O),$$
and $$\varepsilon\nabla\rho_{\varepsilon,\mu}\nabla\u_{\varepsilon,\mu}\to 0\;\;\text{ strongly in }L^1((0,T)\times\O).$$

 For the convergence of term $\mu\Delta^2\u_{\mu}$,  for any test function $\varphi\in L^2(0,T;H^2(\O)),$ we have
 \begin{equation*}
\left|\int_0^T \int_{\O}\mu\Delta^2\u_{\varepsilon,\mu}\varphi\,dx\,dt\right|\leq \sqrt{\mu}\|\sqrt{\mu}\Delta\u_{\varepsilon,\mu}\|_{L^2(0,T;L^2(\O))}\|\Delta\varphi\|_{L^2(0,T;L^2(\O))}\to 0
 \end{equation*}
 as $\mu\to 0,$ thanks to \eqref{L1-varepsilon}.

 Due to weak lower semicontinuity of convex functions we can pass into the limits in energy inequality \eqref{enery inequality with varepsilon}, we have the following Lemma.
 \begin{Lemma}
 \label{enery inequality vanishing varepsilon and mu}
\begin{equation}
\label{energy vanishing v m}
\begin{split}&\int_{\O}\left(\frac{1}{2}\rho|\u|^2+\frac{\rho^{\gamma}}{\gamma-1}+\frac{\eta}{10}\rho^{-10}+\frac{\kappa}{2}|\nabla\sqrt{\rho}|^2
+\frac{\delta}{2}|\nabla\D^4\rho|^2\right)\,dx
\\&+\int_0^T\int_{\O}\rho|\mathbb{D}\u|^2\,dx\,dt
+r_0\int_0^T\int_{\O}|\u|^2\,dx\,dt
+r_1\int_0^T\int_{\O}\rho|\u|^4\,dx\,dt
\\&\leq \int_{\O}\left(\frac{1}{2}\rho_0|\u_0|^2+\frac{\rho_0^{\gamma}}{\gamma-1}+\frac{\eta}{10}\rho_0^{-10}+\frac{\kappa}{2}|\nabla\sqrt{\rho_0}|^2+\frac{\delta}{2}|\nabla\D^4\rho_0|^2\right)\,dx,
\end{split}
\end{equation}
\end{Lemma}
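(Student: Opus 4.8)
The plan is to pass to the lower limit as $\varepsilon,\mu\to0$ in the energy inequality \eqref{enery inequality with varepsilon} for $(\rho_{\varepsilon,\mu},\u_{\varepsilon,\mu})$, using only the compactness already collected in this subsection. First I would note that the integrals in \eqref{enery inequality with varepsilon} carrying the prefactors $\mu$, $\varepsilon\delta$, $\varepsilon$, $\varepsilon\eta$ and $\kappa\varepsilon$ are all nonnegative, so that inequality implies, for a.e.\ $t\in(0,T)$,
\begin{equation*}
\begin{split}
&E(\rho_{\varepsilon,\mu},\u_{\varepsilon,\mu})(t)+\int_0^T\!\int_{\O}\rho_{\varepsilon,\mu}|\mathbb{D}\u_{\varepsilon,\mu}|^2\,dx\,ds+r_0\int_0^T\!\int_{\O}|\u_{\varepsilon,\mu}|^2\,dx\,ds\\
&\qquad+r_1\int_0^T\!\int_{\O}\rho_{\varepsilon,\mu}|\u_{\varepsilon,\mu}|^4\,dx\,ds\le E_0,
\end{split}
\end{equation*}
where $E_0$ is the initial energy appearing in \eqref{enery inequality with varepsilon}, independent of $\varepsilon$ and $\mu$. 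It then suffices to prove that the left-hand side is lower semicontinuous along a suitable subsequence $\varepsilon,\mu\to0$.

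Next I would fix a subsequence (not relabelled) along which \eqref{strong convergence of density} and \eqref{strong convergence of mass} hold, together with $\rho_{\varepsilon,\mu}\to\rho$ and $\rho_{\varepsilon,\mu}\u_{\varepsilon,\mu}\to\rho\u$ a.e.\ in $(0,T)\times\O$, and $\sqrt{\rho_{\varepsilon,\mu}}\u_{\varepsilon,\mu}\to\sqrt{\rho}\u$ strongly in $L^2((0,T)\times\O)$ (already established above via the argument of Lemma \ref{Lemma of strong convergence}); recall also from \eqref{density bounded and bounded below} and \eqref{positive density} that $\rho_{\varepsilon,\mu}\ge C(\delta,\eta)>0$ uniformly. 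Since $\rho_{\varepsilon,\mu}\to\rho$ in $C([0,T];H^9(\O))$ and hence uniformly in $(t,x)$, and $\rho_{\varepsilon,\mu}$ is uniformly bounded and bounded away from $0$, for a.e.\ $t$ the potential-energy pieces $\int_{\O}\rho_{\varepsilon,\mu}^{\gamma}(t)\,dx$, $\int_{\O}\rho_{\varepsilon,\mu}^{-10}(t)\,dx$, $\int_{\O}|\nabla\sqrt{\rho_{\varepsilon,\mu}}(t)|^2\,dx$ and $\int_{\O}|\nabla\D^{4}\rho_{\varepsilon,\mu}(t)|^2\,dx$ converge to the corresponding quantities for $\rho$, while $\tfrac12\|\sqrt{\rho_{\varepsilon,\mu}}\u_{\varepsilon,\mu}(t)\|_{L^2(\O)}^2\to\tfrac12\|\sqrt{\rho}\u(t)\|_{L^2(\O)}^2$ for a.e.\ $t$ by the strong $L^2$ convergence. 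Consequently $E(\rho_{\varepsilon,\mu},\u_{\varepsilon,\mu})(t)\to E(\rho,\u)(t)$ for a.e.\ $t$.

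For the three dissipative time integrals I would argue as follows. Because $\rho_{\varepsilon,\mu}\ge C(\delta,\eta)>0$, estimate \eqref{L5-varepsilon} yields a uniform bound for $\nabla\u_{\varepsilon,\mu}$ in $L^2((0,T)\times\O)$; combined with $\u_{\varepsilon,\mu}\rightharpoonup\u$ in $L^2$ this gives $\u_{\varepsilon,\mu}\rightharpoonup\u$ in $L^2(0,T;H^1(\O))$, and together with the uniform convergence $\sqrt{\rho_{\varepsilon,\mu}}\to\sqrt{\rho}$ one obtains $\sqrt{\rho_{\varepsilon,\mu}}\,\mathbb{D}\u_{\varepsilon,\mu}\rightharpoonup\sqrt{\rho}\,\mathbb{D}\u$ in $L^2((0,T)\times\O)$, so weak lower semicontinuity of the $L^2$ norm gives $\liminf\int_0^T\!\int_{\O}\rho_{\varepsilon,\mu}|\mathbb{D}\u_{\varepsilon,\mu}|^2\,dx\,ds\ge\int_0^T\!\int_{\O}\rho|\mathbb{D}\u|^2\,dx\,ds$. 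Likewise $\liminf\int_0^T\!\int_{\O}|\u_{\varepsilon,\mu}|^2\,dx\,ds\ge\int_0^T\!\int_{\O}|\u|^2\,dx\,ds$ by weak lower semicontinuity, and — since $\rho$ is bounded below, so $\u_{\varepsilon,\mu}=\rho_{\varepsilon,\mu}\u_{\varepsilon,\mu}/\rho_{\varepsilon,\mu}\to\u$ a.e.\ — Fatou's lemma gives $\liminf\int_0^T\!\int_{\O}\rho_{\varepsilon,\mu}|\u_{\varepsilon,\mu}|^4\,dx\,ds\ge\int_0^T\!\int_{\O}\rho|\u|^4\,dx\,ds$. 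Passing to the lower limit in the displayed inequality then yields exactly \eqref{energy vanishing v m}. The step requiring the most care is the identification of the weak limit of $\sqrt{\rho_{\varepsilon,\mu}}\,\mathbb{D}\u_{\varepsilon,\mu}$: this is the one place where the uniform lower bound for the density, produced by the regularizing terms $\eta\nabla\rho^{-10}$ and $\delta\rho\nabla\D^{9}\rho$, is genuinely used, and everything else reduces to routine applications of convexity, lower semicontinuity and Fatou's lemma.
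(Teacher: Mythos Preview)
Your proposal is correct and follows the same overall approach as the paper: drop the nonnegative dissipative terms carrying $\varepsilon$ and $\mu$, then pass to the lower limit using the compactness already obtained in this subsection together with weak lower semicontinuity of convex functionals. The paper's own proof is literally a single sentence invoking ``weak lower semicontinuity of convex functions'' on \eqref{enery inequality with varepsilon}, so you are supplying the details the paper omits rather than arguing differently.

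One small remark on presentation. For the energy pieces you actually prove \emph{convergence} (not just lower semicontinuity), exploiting the strong convergence $\rho_{\varepsilon,\mu}\to\rho$ in $C([0,T];H^9(\O))$ and the uniform lower bound $\rho_{\varepsilon,\mu}\ge C(\delta,\eta)>0$; this is a little stronger than what is needed, but entirely valid. Your observation that identifying the weak $L^2$ limit of $\sqrt{\rho_{\varepsilon,\mu}}\,\mathbb{D}\u_{\varepsilon,\mu}$ is the only nontrivial step, and that it hinges on the uniform positive lower bound for the density produced by the $\eta$ and $\delta$ regularizations, is exactly right and is implicit in the paper's one-line argument.
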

Passing to the limits in \eqref{BD controled by initial value} as $\varepsilon\to0$ and $\mu\to0$, we have the following BD entropy.
\begin{Lemma}
 \label{Lemma BD}
 \begin{equation}
 \label{BD vanishing v m}
 \begin{split}
 &\int_{\O}\left(\frac{1}{2}\rho|\u+\frac{\nabla\rho}{\rho}|^2+\frac{\delta}{2}|\nabla^9\rho|^2+\frac{\kappa}{2}|\nabla\sqrt{\rho}|^2+\frac{\rho^{\gamma}}{\gamma-1}+\frac{\rho^{-10}}{10}-r_0\log\rho\right)\,dx+
 \\&\eta\int_0^T\int_{\O}|\nabla\rho^{-5}|^2\,dx\,dt
 +\int_0^T\int_{\O}|\nabla\rho^{\frac{\gamma}{2}}|\,dx\,dt+2\delta\int_0^T\int_{\O}|\D^5\rho|^2\,dx\,dt
 \\&+\frac{1}{2}\int_0^T\int_{\O}\rho|\nabla\u-\nabla^{T}\u|^2\,dx\,dt
 +\kappa\int_0^T\int_{\O}\rho|\nabla^2\log\rho|^2\,dx\,dt
 \\&\leq 2\int_{\O}\left(\frac{1}{2}\rho_0|\u_0+\frac{\nabla\rho_0}{\rho_0}|^2+\frac{\delta}{2}|\nabla^9\rho_0|^2+\frac{\kappa}{2}|\nabla\sqrt{\rho_0}|^2+\frac{\rho_0^{\gamma}}{\gamma-1}+\frac{\rho_0^{-10}}{10}-r_0\log_{-}\rho_0\right)\,dx
 \\&+2E_0.\end{split}
 \end{equation}
 \end{Lemma}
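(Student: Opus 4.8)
The plan is to pass to the limit $\varepsilon\to0$, $\mu\to0$ in the entropy inequality \eqref{BD controled by initial value}, written for the approximate pair $(\rho_{\varepsilon,\mu},\u_{\varepsilon,\mu})$. On the right-hand side every term is either independent of $\varepsilon,\mu$ — the initial integral and $2E_0$ — or tends to zero: the remainders $R_1,\dots,R_4$ and the parasitic quantity $\varepsilon\|\rho_{\varepsilon,\mu}\|_{H^2(\O)}\|\rho_{\varepsilon,\mu}^{-1}\|_{L^\infty(\O)}$ vanish as $\varepsilon\to0$ or $\mu\to0$, as already noted after Lemma \ref{Lemma BD-1} and as follows from \eqref{density bounded and bounded below}. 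On the left-hand side the nonnegative terms $\delta\varepsilon\int_0^T\int_\O|\D^5\rho_{\varepsilon,\mu}|^2\,dx\,dt$ and $\mu\int_0^T\int_\O|\D\u_{\varepsilon,\mu}|^2\,dx\,dt$ may simply be discarded. It therefore suffices to prove that the $\liminf$ of the remaining left-hand side dominates the corresponding expression for the limit $(\rho,\u)$, i.e.\ that each of its terms is weakly lower semicontinuous along the approximation.

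For the instantaneous (energy-type) part I would rewrite $\frac12\rho\,|\u+\nabla\rho/\rho|^2=\frac12\,|\sqrt{\rho}\,\u+2\nabla\sqrt{\rho}|^2$ and use the strong convergences $\sqrt{\rho_{\varepsilon,\mu}}\u_{\varepsilon,\mu}\to\sqrt{\rho}\u$ and $\nabla\sqrt{\rho_{\varepsilon,\mu}}\to\nabla\sqrt{\rho}$ in $L^2((0,T)\times\O)$ established above; the terms $\frac{\delta}{2}|\nabla^9\rho|^2$ and $\frac{\kappa}{2}|\nabla\sqrt{\rho}|^2$ pass by \eqref{strong convergence of density}, and $\frac{\rho^\gamma}{\gamma-1}$, $\frac{\rho^{-10}}{10}$ pass by \eqref{estimate presure}, \eqref{estimate cold presure} together with a.e.\ convergence. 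Since $\rho\ge C(\delta,\eta)>0$ and $\rho_{\varepsilon,\mu}$ is uniformly bounded in $L^\infty((0,T)\times\O)$ by \eqref{density bounded and bounded below}, $\log\rho_{\varepsilon,\mu}$ is uniformly bounded and converges a.e., so $-r_0\int_\O\log\rho_{\varepsilon,\mu}\,dx\to -r_0\int_\O\log\rho\,dx$ by dominated convergence. Up to a subsequence these convergences also hold for a.e.\ $t\in[0,T]$ — the weak $L^2$-limit of $\sqrt{\rho_{\varepsilon,\mu}}\u_{\varepsilon,\mu}(t)$ being pinned down through the compactness of $\rho_{\varepsilon,\mu}\u_{\varepsilon,\mu}$ in $C([0,T];L^{\frac32}_{weak}(\O))$ — which is why the inequality survives with $T$ as stated. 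For the dissipative integrals $\eta\int_0^T\int_\O|\nabla\rho^{-5}|^2$, $\int_0^T\int_\O|\nabla\rho^{\gamma/2}|^2$, $2\delta\int_0^T\int_\O|\D^5\rho|^2$ and $\frac12\int_0^T\int_\O\rho|\nabla\u-\nabla^T\u|^2$, each integrand is convex in a quantity converging weakly in $L^2$ — in the last case $\sqrt{\rho_{\varepsilon,\mu}}(\nabla\u_{\varepsilon,\mu}-\nabla^T\u_{\varepsilon,\mu})$, whose weak limit is $\sqrt{\rho}(\nabla\u-\nabla^T\u)$ since $\rho_{\varepsilon,\mu}\to\rho$ uniformly and is bounded below while $\nabla\u_{\varepsilon,\mu}\rightharpoonup\nabla\u$ in $L^2$ — so weak lower semicontinuity yields the desired one-sided inequality.

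The term I expect to be the main obstacle is the quantum dissipation $\kappa\int_0^T\int_\O\rho|\nabla^2\log\rho|^2\,dx\,dt$, which is not a convex functional of a single weakly convergent object. I would deal with it in one of two ways: either bound it from below by $\frac{\kappa}{7}\int_0^T\int_\O|\nabla^2\sqrt{\rho}|^2\,dx\,dt$ using Lemma \ref{Lemma of J inequality} and then invoke the weak $L^2$-convergence $\nabla^2\sqrt{\rho_{\varepsilon,\mu}}\rightharpoonup\nabla^2\sqrt{\rho}$ coming from \eqref{strong convergence of density}; or — more cleanly at this level, where there is no vacuum because $\rho_{\varepsilon,\mu}\ge C(\delta,\eta)>0$ uniformly — use the identity $\sqrt{\rho}\,\nabla^2\log\rho=2\nabla^2\sqrt{\rho}-2\,\nabla\sqrt{\rho}\otimes\nabla\sqrt{\rho}/\sqrt{\rho}$ and observe that, since $\rho_{\varepsilon,\mu}\to\rho$ strongly in $C([0,T];H^9(\O))$ and stays bounded below, each factor converges strongly in $L^2((0,T)\times\O)$, so in fact $\rho_{\varepsilon,\mu}|\nabla^2\log\rho_{\varepsilon,\mu}|^2\to\rho|\nabla^2\log\rho|^2$ in $L^1((0,T)\times\O)$; the same remark upgrades the convergences of $\nabla\rho_{\varepsilon,\mu}^{-5}$ and $\nabla\rho_{\varepsilon,\mu}^{\gamma/2}$ to strong ones. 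Taking the $\liminf$ on the left and the limit on the right in \eqref{BD controled by initial value} then produces \eqref{BD vanishing v m}.
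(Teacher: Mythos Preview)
Your proposal is correct and follows the same approach as the paper, which simply states ``Passing to the limits in \eqref{BD controled by initial value} as $\varepsilon\to0$ and $\mu\to0$, we have the following BD entropy'' without further detail. You have supplied the justifications the paper omits --- in particular the treatment of the quantum dissipation term via the uniform lower bound $\rho_{\varepsilon,\mu}\ge C(\delta,\eta)>0$ and the strong $C([0,T];H^9)$ compactness of the density, which is indeed the clean way to recover $\kappa\int_0^T\int_\O\rho|\nabla^2\log\rho|^2$ rather than merely a multiple of $\int_0^T\int_\O|\nabla^2\sqrt{\rho}|^2$.
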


Thus, letting $\varepsilon\to0$ and $\mu\to0$, we have shown that the following existence on the approximation system.
\begin{Proposition}
\label{Lemma on the weak solutions after varepsilon and mu}
There exists the weak solutions $(\rho,\u)$ to the following system
\begin{equation*}
\begin{split}
&\rho_t+\Dv(\rho\u)=0,
\\&(\rho\u)_t+\Dv(\rho\u\otimes\u)+\nabla\rho^{\gamma}-\eta\nabla\rho^{-10}-\Dv(\rho\mathbb{D}\u)
\\&\quad\quad\quad\quad\quad\quad=-r_0\u-r_1\rho|\u|^2\u+\kappa\rho\nabla\left(\frac{\D\sqrt{\rho}}{\sqrt{\rho}}\right)+\delta\rho\nabla\Delta^9\rho,
\end{split}
\end{equation*}
with suitable initial data, for any $T>0$. In particular, the weak solutions $(\rho,\u)$ satisfies the BD entropy \eqref{BD vanishing v m} and the energy inequality \eqref{energy vanishing v m}.
\end{Proposition}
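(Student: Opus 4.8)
The plan is to obtain $(\rho,\u)$ as the $\varepsilon,\mu\to0$ limit of the solutions $(\rho_{\varepsilon,\mu},\u_{\varepsilon,\mu})$ of Proposition \ref{Proposition on the weak solutions after N}, passing to the limit in the weak formulation \eqref{weak formulation after N} and in the mass equation $\rho_t+\Dv(\rho\u)=\varepsilon\Delta\rho$. All the uniform-in-$(\varepsilon,\mu)$ bounds one needs are already recorded in this section: the energy estimates \eqref{L1-varepsilon}--\eqref{L4-varepsilon}, the BD estimates \eqref{estimate from BD}--\eqref{presure from BD}, the quantum estimate \eqref{estimate on density J inequality}, the bound \eqref{L5-varepsilon}, and the compactness estimates of Lemma \ref{Lemma for uniofrm estimates}. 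Since $\rho_{\varepsilon,\mu}\geq C(\delta,\eta)>0$ uniformly by \eqref{density bounded and bounded below}, the bounds on $\sqrt{\rho_{\varepsilon,\mu}}\u_{\varepsilon,\mu}$ in $L^\infty(0,T;L^2(\O))$ and on $\sqrt{\rho_{\varepsilon,\mu}}\nabla\u_{\varepsilon,\mu}$ in $L^2((0,T)\times\O)$ upgrade to $\u_{\varepsilon,\mu}$ bounded uniformly in $L^\infty(0,T;L^2(\O))\cap L^2(0,T;H^1(\O))$, so $\u_{\varepsilon,\mu}\rightharpoonup\u$ weakly in $L^2(0,T;H^1(\O))$; together with the strong convergences \eqref{strong convergence of density} and \eqref{strong convergence of mass} coming from Aubin--Lions, this suffices.

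Next I would pass to the limit term by term in \eqref{weak formulation after N}. The convective term $\rho_{\varepsilon,\mu}\u_{\varepsilon,\mu}\otimes\u_{\varepsilon,\mu}\to\rho\u\otimes\u$, the pressure $\rho_{\varepsilon,\mu}^{\gamma}\to\rho^{\gamma}$, the cold pressure $\rho_{\varepsilon,\mu}^{-10}\to\rho^{-10}$ in $L^1$, the drag terms $r_0\u_{\varepsilon,\mu}$ and $r_1\rho_{\varepsilon,\mu}|\u_{\varepsilon,\mu}|^2\u_{\varepsilon,\mu}$, and the vanishing of $\mu\Delta^2\u_{\varepsilon,\mu}$, $\varepsilon\nabla\rho_{\varepsilon,\mu}\cdot\nabla\u_{\varepsilon,\mu}$ and $\varepsilon\Delta\rho_{\varepsilon,\mu}$ have all been treated above. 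It remains to handle: the viscous term $\int\rho_{\varepsilon,\mu}\mathbb{D}\u_{\varepsilon,\mu}:\nabla\varphi$, which converges because $\rho_{\varepsilon,\mu}\to\rho$ strongly in $L^\infty((0,T)\times\O)$ (by \eqref{strong convergence of density} and $H^9(\O)\subset L^\infty(\O)$) while $\mathbb{D}\u_{\varepsilon,\mu}\rightharpoonup\mathbb{D}\u$ weakly in $L^2$; the quantum terms $\int\Delta\sqrt{\rho_{\varepsilon,\mu}}\,\nabla\sqrt{\rho_{\varepsilon,\mu}}\,\psi$ and $\int\Delta\sqrt{\rho_{\varepsilon,\mu}}\,\sqrt{\rho_{\varepsilon,\mu}}\,\Dv\psi$, which converge because $\Delta\sqrt{\rho_{\varepsilon,\mu}}\rightharpoonup\Delta\sqrt{\rho}$ weakly in $L^2((0,T)\times\O)$ by \eqref{estimate on density J inequality} while $\nabla\sqrt{\rho_{\varepsilon,\mu}}\to\nabla\sqrt{\rho}$ and $\sqrt{\rho_{\varepsilon,\mu}}\to\sqrt{\rho}$ strongly in $L^2(0,T;H^1(\O))$; and the capillary term $\delta\int\rho_{\varepsilon,\mu}\nabla\Delta^9\rho_{\varepsilon,\mu}\,\varphi$, which I would make sense of and pass to the limit by integrating by parts, moving most of the derivatives onto the smooth test function so that the factor carrying derivatives of $\rho_{\varepsilon,\mu}$ stays within the uniform bound $\rho_{\varepsilon,\mu}\in L^2(0,T;H^{10}(\O))$ and is paired against a quantity built from $\rho_{\varepsilon,\mu}$ and $\varphi$ that converges strongly in $L^2((0,T)\times\O)$ by \eqref{strong convergence of density}; the product then converges by weak times strong.

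Having identified the limiting system, I would recover the traces at $t=0$ and $t=T$ from the uniform bounds on $(\rho_{\varepsilon,\mu}\u_{\varepsilon,\mu})_t$ in $L^2(0,T;H^{-9}(\O))$ and on $(\rho_{\varepsilon,\mu})_t$ from Lemma \ref{Lemma for uniofrm estimates}, which place $\rho_{\varepsilon,\mu}\u_{\varepsilon,\mu}$ and $\rho_{\varepsilon,\mu}$ in suitable spaces of time-continuous functions uniformly and hence give convergence of their values at the endpoints. Finally, the energy inequality \eqref{energy vanishing v m} and the BD entropy \eqref{BD vanishing v m} required in the statement are precisely the content of Lemma \ref{enery inequality vanishing varepsilon and mu} and Lemma \ref{Lemma BD}, obtained from \eqref{enery inequality with varepsilon} and \eqref{BD controled by initial value} by weak lower semicontinuity of the convex functionals together with the strong convergence of $\rho_{\varepsilon,\mu}$, $\sqrt{\rho_{\varepsilon,\mu}}$ and $\sqrt{\rho_{\varepsilon,\mu}}\u_{\varepsilon,\mu}$, so nothing further is needed for those.

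The step I expect to be the main obstacle is the highest-order capillary term $\delta\rho\nabla\Delta^9\rho$: the regularizing parameters $\varepsilon,\mu$ disappear but $\delta$ is kept fixed, so this term must be carried through the limit, and one has to distribute its derivatives carefully by integration by parts against the test function so that the surviving $\rho_{\varepsilon,\mu}$-factor stays exactly within the uniform bound $\rho_{\varepsilon,\mu}\in L^2(0,T;H^{10}(\O))$ --- enough for a weak--strong product argument but with no room to spare. Everything else amounts to reassembling the compactness already established in this section.
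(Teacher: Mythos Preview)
Your proposal is correct and follows essentially the same route as the paper: extract uniform-in-$(\varepsilon,\mu)$ bounds from the energy and BD estimates, apply Aubin--Lions via Lemma \ref{Lemma for uniofrm estimates} to get the strong convergences \eqref{strong convergence of density}--\eqref{strong convergence of mass}, pass to the limit term by term in \eqref{weak formulation after N}, and obtain \eqref{energy vanishing v m}, \eqref{BD vanishing v m} by lower semicontinuity. You are in fact more explicit than the paper on several points it leaves implicit --- the viscous term $\rho\mathbb{D}\u$, the quantum terms, and especially the high-order capillary term $\delta\rho\nabla\Delta^9\rho$, which the paper does not treat separately at this stage but which, as you observe, is handled by integration by parts together with $\rho_{\varepsilon,\mu}\to\rho$ strongly in $C(0,T;H^9(\O))$ and weakly in $L^2(0,T;H^{10}(\O))$. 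One minor variation: you exploit the uniform lower bound $\rho_{\varepsilon,\mu}\geq C(\delta,\eta)>0$ to upgrade to $\u_{\varepsilon,\mu}$ bounded in $L^2(0,T;H^1(\O))$, which gives a cleaner weak limit for $\mathbb{D}\u$; the paper instead works only with $\u_{\varepsilon,\mu}\rightharpoonup\u$ in $L^2((0,T)\times\O)$ coming from the $r_0$-damping, but both are adequate here.
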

\subsection{Pass to limits as $\eta,\delta\to0$}

At this level, the weak solutions $(\rho,\u)$  satisfies the BD entropy \eqref{BD vanishing v m} and the energy inequality \eqref{energy vanishing v m}, thus we have the following regularities:
\begin{equation}
\label{estimate from energy 1}
\sqrt{\rho}\u\in L^{\infty}(0,T;L^2(\O)),\sqrt{\rho}\mathbb{D}\u\in L^2((0,T)\times\O),
\end{equation}
\begin{equation}
\label{estimates on density 1}
\sqrt{\delta}\rho\in L^{\infty}(0,T;H^9(\O)),\sqrt{\kappa}\sqrt{\rho}\in L^{\infty}(0,T;H^1(\O)),
\end{equation}
\begin{equation}
\label{cold preasure}
\eta^{1/10}\rho^{-1}\in L^{\infty}(0,T;L^{10}(\O)),\sqrt{\eta}\nabla\rho^{-5}\in L^2((0,T)\times\O),
\end{equation}
\begin{equation*}
\u\in L^{2}(0,T;L^{2}(\O)),\rho^{\frac{1}{4}}\u\in L^4((0,T)\times\O),
\end{equation*}
\begin{equation}
\label{estimates on density}
 \nabla\sqrt{\rho}\in L^{\infty}(0,T;L^2(\O)),\quad\sqrt{\delta}\D^5\rho\in L^2(0,T;L^2(\O)).
 \end{equation}
 In particular, we have \begin{equation*}
 \kappa\int_0^T\int_{\O}\rho|\nabla^2\log\rho|^2\,dx\,dt\leq C,
 \end{equation*}
 which yields
  \begin{equation}
\label{J inequality}
\kappa^{\frac{1}{2}}\|\sqrt{\rho}\|_{L^2(0,T;H^2(\O))}+\kappa^{\frac{1}{4}}\|\nabla\rho^{\frac{1}{4}}\|_{L^4(0,T;L^{4}(\O))}\leq C,
\end{equation}
where the constant $C>0$ is independent of $\eta,\,\delta.$ That is, this inequality is still true after $\eta\to0$ and $\delta\to0.$

Thus, we have the same estimates as in Lemma \ref{Lemma for uniofrm estimates} at the levels with $\eta$ and $\delta$. Thus, we deduce the same compactness for $(\rho_{\eta},\u_{\eta})$ and $(\rho_{\delta},\u_{\delta})$.
Here, we focus on the convergence of the terms $\eta\nabla\rho^{-10}$ and $\delta\rho\nabla\D^9\rho.$ Here we pass to the limits with respect to $\eta$ first, and then with respect to $\delta$. Here we state the following two lemmas.
\begin{Lemma}
\label{Lemma on eta} For any $\rho_{\eta}$ defined as in Proposition \ref{Lemma on the weak solutions after varepsilon and mu}, we have
\begin{equation*}
\begin{split}
\eta\int_0^T\int_{\O}\rho_{\eta}^{-10}\,dx\,dt\to 0
\end{split}
\end{equation*}
as $\eta\to 0.$
\end{Lemma}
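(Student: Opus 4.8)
The plan is to extract the claim from the uniform $\eta$-independent bounds already in hand, and then quantify the rate at which the cold-pressure contribution vanishes. From the energy inequality \eqref{energy vanishing v m} at the $(\eta,\delta)$-level we have the uniform bound
\begin{equation*}
\frac{\eta}{10}\|\rho_\eta^{-10}\|_{L^\infty(0,T;L^1(\O))}\leq E_0+C,
\end{equation*}
so $\eta\int_\O\rho_\eta^{-10}\,dx$ is bounded uniformly in $\eta$, which by itself only gives $\eta\int_0^T\int_\O\rho_\eta^{-10}\,dx\,dt\leq CT$. To get convergence to zero I would improve this to an $\eta$-gain. First I would use the BD estimate \eqref{cold preasure}, namely $\sqrt{\eta}\,\nabla\rho_\eta^{-5}\in L^2((0,T)\times\O)$ uniformly, together with $\rho_\eta^{-5}=(\rho_\eta^{-10})^{1/2}\in L^\infty(0,T;L^2(\O))$ (again from the energy bound), to interpolate via the Sobolev/Gagliardo–Nirenberg inequality on $\O=\mathbb T^d$: $\|\rho_\eta^{-5}\|_{L^{q}}\leq C\|\nabla\rho_\eta^{-5}\|_{L^2}^{\theta}\|\rho_\eta^{-5}\|_{L^2}^{1-\theta}$ for a suitable exponent $q>2$ (e.g. $q=6$, $\theta=d/(d+2)\cdot\frac{3}{?}$ — the precise $\theta$ depends on $d$). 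This yields $\|\rho_\eta^{-5}\|_{L^{r}(0,T;L^{q}(\O))}\leq C\eta^{-\theta/2}$ for the corresponding $r$, hence $\|\rho_\eta^{-10}\|_{L^{r/2}(0,T;L^{q/2})}\leq C\eta^{-\theta}$.

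Then I would conclude by Hölder in space-time, writing
\begin{equation*}
\eta\int_0^T\int_\O\rho_\eta^{-10}\,dx\,dt
= \eta\int_0^T\int_\O \rho_\eta^{-10\alpha}\cdot\rho_\eta^{-10(1-\alpha)}\,dx\,dt,
\end{equation*}
interpolating between the $L^\infty_t L^1_x$ bound (which carries the factor $\eta^{-1}$) and the improved $L^{r/2}_t L^{q/2}_x$ bound (which carries $\eta^{-\theta}$). Choosing $\alpha$ so that the total power of $\eta$ is strictly positive gives $\eta\int_0^T\int_\O\rho_\eta^{-10}\,dx\,dt\leq C\eta^{\beta}$ for some $\beta>0$, which tends to $0$. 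Alternatively, and perhaps more cleanly, one can avoid the space-time interpolation and instead argue by contradiction plus the almost-everywhere convergence $\rho_\eta\to\rho$ with $\rho$ bounded below a.e. at the fixed-$\delta$ level (from \eqref{density bounded and bounded below}, which survives the $\varepsilon,\mu$ limit): since $\rho$ is bounded below independent of $\eta$ on the support where the limit density is defined, $\rho_\eta^{-10}$ cannot concentrate, so the prefactor $\eta$ kills the integral; but making "cannot concentrate" precise still requires an equi-integrability statement, which is exactly what the interpolation above supplies.

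The main obstacle is getting a quantitative $\eta$-dependent higher-integrability bound on $\rho_\eta^{-10}$ that beats the $\eta^{-1}$ coming from the bare energy estimate; everything else is routine Hölder/Sobolev bookkeeping. Care must be taken that the Gagliardo–Nirenberg exponents are admissible in dimension $d=2,3$ and that the resulting time exponent $r/2$ is $\geq 1$; if it is exactly $1$ one still gets a bound, and the interpolation with the $L^\infty_t$ estimate then only needs the space exponents. Once $\eta\|\rho_\eta^{-10}\|_{L^1((0,T)\times\O)}\to 0$ is established, the analogous statement for $\delta\rho_\delta\nabla\D^9\rho_\delta$ in the next lemma will follow by the parallel use of the $\sqrt{\delta}\,\D^5\rho$ bound from \eqref{estimates on density}.
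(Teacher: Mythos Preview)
Your quantitative route does not produce a positive power of $\eta$. Tracking the constants, the energy bound gives $\|\rho_\eta^{-5}\|_{L^\infty_t L^2_x}\leq C\eta^{-1/2}$ and the BD bound gives $\|\nabla\rho_\eta^{-5}\|_{L^2_{t,x}}\leq C\eta^{-1/2}$; plugging these into Gagliardo--Nirenberg yields $\|\rho_\eta^{-5}\|_{L^r_t L^q_x}\leq C\eta^{-1/2}$ for every admissible pair $(r,q)$, hence $\|\rho_\eta^{-10}\|\leq C\eta^{-1}$ in any interpolated space. Both endpoint estimates carry the same $\eta^{-1}$ scaling, so your H\"older splitting $\rho_\eta^{-10\alpha}\cdot\rho_\eta^{-10(1-\alpha)}$ returns $\eta\cdot\eta^{-1}=O(1)$, not $O(\eta^\beta)$. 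The bound $\eta\rho_\eta^{-10}\in L^{5/3}_{t,x}$ that you obtain is correct and useful, but only as an equi-integrability estimate, not as a decay rate.

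Your alternative approach is the right one, but the justification you give for ``$\rho$ bounded below a.e.'' is wrong: the lower bound in \eqref{density bounded and bounded below} is $C(\delta,\eta)$, which degenerates as $\eta\to0$. The paper instead uses the $\eta$-independent control of $-r_0\int_\O\log\rho_\eta\,dx$ coming from the BD entropy \eqref{BD vanishing v m}. By Fatou this passes to the limit and forces $(\ln(1/\rho))_+\in L^\infty_t L^1_x$, hence $|\{\rho=0\}|=0$. Combined with the a.e.\ convergence $\rho_\eta\to\rho$ (Aubin--Lions), one gets $\eta\rho_\eta^{-10}\to0$ a.e., and then the uniform $L^{5/3}$ bound you derived supplies the equi-integrability needed to upgrade this to $L^1$ convergence. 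So the missing ingredient is precisely the $\log\rho$ control from the drag term $r_0\u$; without it there is no $\eta$-free information ruling out vacuum in the limit.
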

\begin{proof}
By \eqref{BD vanishing v m}, we have $$\int_{\O}(\ln(\frac{1}{\rho_{\eta}}))_{+}\,dx\,\leq C(r_0)<\infty.$$
We notice that $$y\in\R^{+}\to \ln(\frac{1}{y})_{+}$$
 is a convex continuous function.
 Moreover, Fatou's Lemma yields
 \begin{equation*}
 \begin{split}
 \int_{\O}(\ln(\frac{1}{\rho}))_{+}\,dx&\leq \int_{\O}\lim\inf (\ln(\frac{1}{\rho_{\eta}}))_{+}\,dx
 \\&\leq \lim\inf_{\eta\to 0}\int_{\O}(\ln(\frac{1}{\rho_{\eta}}))_{+}\,dx,
 \end{split}
 \end{equation*}
 and hence $(\ln(\frac{1}{\rho}))_{+}$ is in $L^{\infty}(0,T;L^1(\O)).$ It allows us to conclude that
 \begin{equation}
 \label{measure 0}
 \left|\{x:|\rho(t,x)=0\}\right|=0\quad\text{ for almost every } t,
 \end{equation}
 where $|A|$ denotes the measure of set A.

 By $(\rho_{\eta})_{t}=-\nabla\rho_{\eta}\u_{\eta}-\rho_{\eta}\Dv\u_{\eta},$ and thanks to \eqref{estimate from energy 1}-\eqref{J inequality}, we have
 $$(\rho_{\eta})_t\in L^2(0,T;L^3(\O))+L^2((0,T;L^2(\O)).$$ This, together with \eqref{estimates on density 1}, up to a subsequence and the Aubin-Lions Lemma gives us that $\rho_{\eta}$ converges to $\rho$ in $L^2(0,T;L^1(\O)),$ and hence $\rho_{\eta}\to\rho\text{ a.e.} .$\\
 Thanks to \eqref{measure 0}, we deduce
 \begin{equation}
 \label{cold preasure almost everywhere convergence}
 \eta\rho_{\eta}^{-10}\to 0\;\;\text{ a.e.}
 \end{equation}
 By \eqref{cold preasure} and Poincar\'{e}'s inequality, we have a uniform bound, with respect to $\eta$, of
 \begin{equation*}
 \eta\rho_{\eta}^{-10}\in L^{\infty}(0,T;L^1(\O))\cap L^{1}(0,T;L^3(\O)).
 \end{equation*}
 The $L^p-L^q$ interpolation inequality gives
 $$\|\eta\rho_{\eta}^{-10}\|_{L^{\frac{5}{3}}(0,T;L^{\frac{5}{3}}(\O))}\leq \|\eta\rho_{\eta}^{-10}\|^{\frac{2}{5}}_{L^{\infty}(0,T;L^1(\O))} \|\eta\rho_{\eta}^{-10}\|^{\frac{3}{5}}_{ L^{1}(0,T;L^3(\O))}\leq C,$$
 and hence $\eta\rho_{\eta}^{-10}$ is uniformly bounded in $L^{\frac{5}{3}}(0,T;L^{\frac{5}{3}}(\O))$.
 This, with \eqref{cold preasure almost everywhere convergence}, yields $$\eta\rho_{\eta}^{-10}\to 0\;\;\text{ strongly  in } L^1(0,T;L^1(\O)).$$
\end{proof}
\begin{Lemma}
\label{Lemma on delta} For any $\rho_{\delta}$ defined as in Proposition \ref{Lemma on the weak solutions after varepsilon and mu}, we have, for any test function $\varphi$,
$$\delta\int_0^T\int_{\O}\rho_{\delta}\nabla\D^9\rho_{\delta}\varphi\,dx\,dt\to 0$$
as $\delta\to 0$.
\end{Lemma}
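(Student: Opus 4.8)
The plan is to remove all the derivatives from $\nabla\D^9\rho_\delta$ by integration by parts, rewriting the quantity as a finite sum of products of one factor that is uniformly bounded in $L^2((0,T)\times\O)$ with another factor that converges strongly to $0$ there. Since $\O=\mathbb{T}^d$ has no boundary, $\D^4$ is self-adjoint and $\D^9=\D^4\D^5$, so for any vector-valued test function $\varphi$ I would first write
\[
\delta\int_0^T\!\!\int_{\O}\rho_\delta\,\nabla\D^9\rho_\delta\cdot\varphi\,dx\,dt
=-\delta\int_0^T\!\!\int_{\O}\D^4\big(\Dv(\rho_\delta\varphi)\big)\,\D^5\rho_\delta\,dx\,dt .
\]
Expanding $\Dv(\rho_\delta\varphi)=\varphi\cdot\nabla\rho_\delta+\rho_\delta\,\Dv\varphi$ and applying $\D^4$ by the Leibniz rule yields a finite sum $\sum_{j=0}^{9}(\nabla^j\rho_\delta)\,b_j$, where each $b_j$ is built from $\varphi$ and is bounded together with all its derivatives (for $j=9$, $b_9$ is essentially $\varphi$ itself). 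Therefore
\[
\delta\int_0^T\!\!\int_{\O}\rho_\delta\,\nabla\D^9\rho_\delta\cdot\varphi\,dx\,dt
=-\sum_{j=0}^{9}\int_0^T\!\!\int_{\O}\big(\sqrt\delta\,\nabla^j\rho_\delta\big)\,b_j\,\big(\sqrt\delta\,\D^5\rho_\delta\big)\,dx\,dt ,
\]
and it remains to control the two families of factors appearing here.

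By \eqref{estimates on density} the factor $\sqrt\delta\,\D^5\rho_\delta$ is bounded in $L^2((0,T)\times\O)$, uniformly in $\delta$. For the factors $\sqrt\delta\,\nabla^j\rho_\delta$ with $0\le j\le 9$, I would combine \eqref{estimates on density 1}, which gives $\sqrt\delta\,\rho_\delta$ bounded in $L^\infty(0,T;H^9(\O))$, with the bound $\sqrt\delta\,\D^5\rho_\delta\in L^2((0,T)\times\O)$ and the embedding $H^9(\O)\hookrightarrow L^\infty(\O)$ to get $\sqrt\delta\,\rho_\delta$ bounded in $L^2(0,T;H^{10}(\O))$. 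On the other hand, the energy inequality \eqref{energy vanishing v m} bounds $\rho_\delta$ in $L^\infty(0,T;L^\gamma(\O))\hookrightarrow L^\infty(0,T;H^{-s}(\O))$ for $s$ large, hence $\sqrt\delta\,\rho_\delta\to0$ in $L^\infty(0,T;H^{-s}(\O))$. Interpolating $H^9$ between $H^{10}$ and $H^{-s}$ pointwise in time and then applying H\"older in time gives
\[
\|\sqrt\delta\,\rho_\delta\|_{L^2(0,T;H^9(\O))}\le C\,\|\sqrt\delta\,\rho_\delta\|_{L^\infty(0,T;H^{-s}(\O))}^{\,1-\theta}\,\|\sqrt\delta\,\rho_\delta\|_{L^2(0,T;H^{10}(\O))}^{\,\theta}\longrightarrow 0
\]
as $\delta\to0$, for the appropriate $\theta\in(0,1)$; the same interpolation yields $\sqrt\delta\,\rho_\delta\to0$ in $L^\infty(0,T;H^k(\O))$ for every $k<9$. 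In particular $\sqrt\delta\,\nabla^j\rho_\delta\to0$ in $L^2((0,T)\times\O)$ for every $j\le9$.

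Putting these together, each term in the sum is bounded by $\|b_j\|_{L^\infty}\,\|\sqrt\delta\,\nabla^j\rho_\delta\|_{L^2((0,T)\times\O)}\,\|\sqrt\delta\,\D^5\rho_\delta\|_{L^2((0,T)\times\O)}$, which tends to $0$; summing over the finitely many indices gives $\delta\int_0^T\int_{\O}\rho_\delta\nabla\D^9\rho_\delta\varphi\,dx\,dt\to0$. The main obstacle is the borderline index $j=9$: there $\sqrt\delta\,\nabla^9\rho_\delta$ is a priori only bounded (in $L^\infty(0,T;L^2(\O))$), and its strong decay relies on the extra regularity $\sqrt\delta\,\D^5\rho_\delta\in L^2((0,T)\times\O)$ feeding the interpolation above; all the lower-order terms $j\le 8$ are already handled by $\sqrt\delta\,\rho_\delta$ being bounded in $L^\infty(0,T;H^9(\O))$. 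I would also check that the only integration-by-parts split that keeps $\rho_\delta$ at no more than $9$ derivatives is precisely $\D^9=\D^4\D^5$, so no further care with the distribution of derivatives is needed.
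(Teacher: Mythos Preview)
Your proof is correct and begins with the same integration-by-parts identity
\[
\delta\int_0^T\!\!\int_{\O}\rho_\delta\,\nabla\D^9\rho_\delta\cdot\varphi\,dx\,dt
=-\delta\int_0^T\!\!\int_{\O}\D^4\big(\Dv(\rho_\delta\varphi)\big)\,\D^5\rho_\delta\,dx\,dt
\]
as the paper, and both single out the top-order term with $\nabla^9\rho_\delta$ as the only nontrivial one. The route diverges in how that term is controlled. The paper uses a Gagliardo--Nirenberg interpolation
\[
\|\nabla^9\rho_\delta\|_{L^3}\le C\|\nabla^{10}\rho_\delta\|_{L^2}^{18/19}\|\rho_\delta\|_{L^3}^{1/19},
\]
which places $\delta^{9/19}\nabla^9\rho_\delta$ in $L^{19/9}(0,T;L^3(\O))$; pairing this with $\sqrt\delta\,\D^5\rho_\delta\in L^2((0,T)\times\O)$ via H\"older leaves an explicit leftover factor $\delta^{1/38}$, which furnishes the decay. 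You instead stay entirely in $L^2$-based Sobolev spaces: from $\rho_\delta$ bounded in $L^\infty(0,T;L^\gamma)$ you get $\sqrt\delta\,\rho_\delta\to0$ in $L^\infty(0,T;H^{-s})$, and then interpolate $H^9$ between $H^{-s}$ and $H^{10}$ to obtain $\sqrt\delta\,\rho_\delta\to0$ in $L^2(0,T;H^9)$, so that every factor $\sqrt\delta\,\nabla^j\rho_\delta$ with $j\le9$ tends to zero in $L^2((0,T)\times\O)$. Your argument is a bit cleaner in that it avoids the Lebesgue-exponent bookkeeping of Gagliardo--Nirenberg and uses only the $L^2\times L^2\times L^\infty$ H\"older pairing; the paper's version, on the other hand, produces an explicit power rate $\delta^{1/38}$. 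The parenthetical mention of $H^9\hookrightarrow L^\infty$ is not actually needed for the bound $\sqrt\delta\,\rho_\delta\in L^2(0,T;H^{10})$: this follows directly from $\sqrt\delta\,\D^5\rho_\delta\in L^2_{t,x}$ together with $\sqrt\delta\,\rho_\delta\in L^\infty(0,T;H^9)\subset L^2(0,T;L^2)$.
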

\begin{proof}
By \eqref{estimates on density 1} and  \eqref{estimates on density},
We have uniform  bounds with respect to $\delta$ of
$$\rho_{\delta}\in L^{\infty}(0,T;L^3(\O)),\;\;\sqrt{\delta}\rho_{\delta}\in L^{\infty}(0,T;H^9(\O)),\;\;\sqrt{\delta}\rho_{\delta}\in L^2(0,T;H^{10}(\O)).$$
This, with Gagliardo-Nirenberg interpolation inequality, yields
\begin{equation*}
\|\nabla^9\rho_{\delta}\|_{L^3}\leq C\|\nabla^{10}\rho_{\delta}\|_{L^2}^{\frac{18}{19}}\|\rho_{\delta}\|_{L^3}^{\frac{1}{19}}.
\end{equation*}
Thus, we have
\begin{equation*}
\int_0^T\delta\left(\int_{\O}|\nabla^9\rho_{\delta}|^3\,dx\right)^{\frac{19}{27}}\,dt\leq C\sup_{t\in(0,T)}\left(\|\rho_{\delta}\|_{L^3(\O)}\right)^{\frac{1}{9}}\int_0^T\int_{\O}\delta |\nabla^{10}\rho_{\delta}|^2\,dx\,dt,
\end{equation*}
which implies
\begin{equation}
\label{more estimate on higher density}
\delta^{\frac{9}{19}}|\nabla^9\rho_{\delta}|\in L^{\frac{19}{9}}(0,T;L^3(\O)).
\end{equation}

For the term
$$ \delta\int_0^T\int_{\O}\rho_{\delta}\nabla\D^9\rho_{\delta}\varphi\,dx\,dt=-\delta\int_0^T\int_{\O}\D^4\Dv(\rho_{\delta}\varphi)\D^5\rho_{\delta}\,dx\,dt,$$
we focus on the most difficulty term
\begin{equation*}
\begin{split}
&\left|\delta\int_0^T\int_{\O}\D^4(\nabla\rho_{\delta})\D^5\rho_{\delta}\varphi\,dx\,dt\right|
\leq C(\varphi)\int_0^T\int_{\O}\sqrt{\delta}|\nabla^{10}\rho_{\delta}|\delta^{\frac{9}{19}}|\nabla^9\rho_{\delta}|\delta^{\frac{1}{38}}\,dx\,dt
\\& \leq C(\varphi)\delta^{\frac{1}{38}}\|\sqrt{\delta}\nabla^{10}\rho_{\delta}\|_{L^2(0,T;L^2(\O))}\|\delta^{\frac{9}{19}}\nabla^9\rho_{\delta}\|_{L^{\frac{19}{9}}(0,T;L^3(\O))}
\\&\to 0
\end{split}
\end{equation*}
as $\delta\to0$, where we used \eqref{more estimate on higher density}.

We can apply the same arguments to handle the other terms from $$\delta\int_0^T\int_{\O}\D^4\Dv(\rho_{\delta}\varphi)\D^5\rho_{\delta}\,dx\,dt.$$
Thus we have $$\delta\int_0^T\int_{\O}\rho_{\delta}\nabla\D^9\rho_{\delta}\varphi\,dx\,dt\to 0$$
as $\delta\to 0$.
\end{proof}

Here we have to remark that \eqref{J inequality} is still true even after vanishing $\eta$ and $\delta$.
Thus, letting $\eta\to0$ and $\delta\to 0$, we have shown that $(\rho,\u)$ solves \eqref{goal system}.

Meanwhile,  due to weak lower semicontinuity of convex functions, we have \eqref{energy inequality for NS} by vanishing $\eta$ and $\delta$ in energy inequality \eqref{energy vanishing v m} .
Similarly, we can obtain BD-entropy \eqref{BD entropy} by passing into the limits in \eqref{BD vanishing v m} as $\eta\to0$ and $\delta\to0.$
\subsection{Other Properties}
The time evolution of the integral averages
$$t\in(0,T)\longmapsto\int_{\O}(\rho\u)(t,x)\cdot\psi(x)\,dx$$
is defined by
\begin{equation}
\begin{split}
\label{integral for mass}
&\frac{d}{dt}\int_{\O}(\rho\u)(t,x)\cdot\psi(x)\,dx=
\int_{\O}\rho\u\otimes\u:\nabla \psi\,dx+\int_{\O}\rho^{\gamma}\Dv\psi\,dx+\int_{\O}\rho\mathbb{D}\u\nabla\psi\,dx\\&+
r_0\int_{\O}\u\psi\,dx\,+r_1\int_{\O}\rho|\u|^2\u\psi\,dx+2\kappa\int_{\O}\D\sqrt{\rho}\nabla\sqrt{\rho}\psi\;dx+\kappa\int_{\O}\D\sqrt{\rho}\sqrt{\rho}\Dv\psi\,dx.
\end{split}
\end{equation}
All estimates from \eqref{energy inequality for NS} and \eqref{BD entropy} imply \eqref{integral for mass} is continuous function with respect to $t\in[0,T].$
On the other hand, we have $$\rho\u\in L^{\infty}(0,T;L^{\frac{3}{2}}(\O)\cap L^{4}(0,T;L^2(\O)),$$
and hence $$\rho\u\in C([0,T];L_{weak}^{\frac{3}{2}}(\O)).$$

We notice $$(\sqrt{\rho})_t=-\frac{1}{2}\sqrt{\rho}\Dv\u-\nabla\sqrt{\rho}\cdot\u,$$
thus \begin{equation*}
\label{estimate of sqrt density}
\|(\sqrt{\rho})_t\|_{L^2((0,T)\times\O)}\leq C\|\sqrt{\rho}\Dv\u\|_{L^2((0,T)\times\O)}+C\|\nabla\rho^{\frac{1}{4}}\|_{L^4((0,T)\times\O)}\|\rho^{\frac{1}{4}}\u\|_{L^4((0,T)\times\O)}.
\end{equation*}
This, with $\nabla\sqrt{\rho_{\kappa}}\in L^{\infty}(0,T;L^2(\O)),$ we have
\begin{equation}
\label{strong convergence kappa level}\sqrt{\rho_{\kappa}}\to\sqrt{\rho}\text{ strongly in } L^2(0,T;L^2(\O)).
\end{equation}
Because $$(\rho\u)_t=-\Dv(\rho\u\otimes\u)-\nabla\rho^{\gamma}+\Dv(\rho\mathbb{D}\u)-r_0\u-r_1\rho|\u|^2\u+\kappa\rho\nabla\left(\frac{\D\sqrt{\rho}}{\sqrt{\rho}}\right),$$
thus we have $(\rho\u)_t$ is bounded in $L^4(0,T;W^{-1,4}(\O)).$ Meanwhile, we have
\begin{equation*}
\begin{split}
\nabla(\rho\u)=(\rho^{\frac{1}{4}}\u)\cdot\nabla\sqrt{\rho}\rho^{\frac{1}{4}}+\sqrt{\rho}\sqrt{\rho}\nabla\u,
\end{split}
\end{equation*}
which yields $\nabla(\rho\u)\in  L^4(0,T;L^{\frac{6}{5}}(\O))+L^{2}(0,T;L^{\frac{3}{2}}(\O)).$
 The Aubin-Lions Lemma gives us
\begin{equation}
\label{strong convergence of mass kappa}
\rho_{\kappa}\u_{\kappa}\to\rho\u\;\;\;\text{strongly in } L^2((0,T)\times\O).
\end{equation}

Applying Lemma \ref{Lemma of strong convergence} with \eqref{strong convergence kappa level}, \eqref{strong convergence of mass kappa}, and
$$\int_0^T\int_{\O}\rho_{\kappa}|\u_{\kappa}|^4\,dx\,dt\leq C<\infty,$$
 we have
$$\sqrt{\rho_{\kappa}}\u_{\kappa}\to\sqrt{\rho}\u\text{ strongly in } L^2(0,T;L^2(\O)).$$

\section{acknowledgement}
A. Vasseur's research was supported in part by NSF grant DMS-1209420. C. Yu's reserch was
supported in part by an AMS-Simons Travel Grant.

\end{document}